\newtheorem{thm}{Theorem}
\newtheorem{lem}[thm]{Lemma}
\newtheorem{cor}[thm]{Corollary}
\newtheorem{prop}[thm]{Proposition}
\newtheorem{defn}[thm]{Definition}
\newtheorem{rmk}[thm]{Remark}
\newtheorem{obs}[thm]{Observation}
\newtheorem{example}[thm]{Example}
\def\QED{\ensuremath{{\square}}}
\def\markatright#1{\leavevmode\unskip\nobreak\quad\hspace*{\fill}{#1}}
\newenvironment{proof}
 {\begin{trivlist}\item[\hskip\labelsep{\bf Proof.}]}
 {\markatright{\QED}\end{trivlist}}
\DeclareMathOperator{\TL}{TL}
\begin{document}

\title{Transformed flips in triangulations and matchings}

\author{
	Oswin Aichholzer\thanks{Institute for Software Technology,
	Graz University of Technology, Graz, Austria, \newline  {\tt [oaich|bvogt]@ist.tugraz.at} } 
\and
    Lukas Andritsch\thanks{Mathematics and Scientific Computing, University of Graz, Graz, Austria, \newline  {\tt [baurk|lukas.andritsch]@uni-graz.at} }
\and
	Karin Baur\textsuperscript{$\dagger$}
\and
	Birgit Vogtenhuber\textsuperscript{$\ast$}
}

\maketitle

\begin{abstract}
Plane perfect matchings of $2n$ points in convex position are in bijection with triangulations of convex polygons of size $n+2$. Edge flips are a classic operation to perform local changes both structures have in common. In this work, we use the explicit bijection from \cite{AABV2017} to determine the effect of an edge flip on the one side of the bijection to the other side, that is, we show how the two different types of edge flips are related. Moreover, we give an algebraic interpretation of the flip graph of triangulations in terms of elements of the corresponding Temperley-Lieb algebra.
\end{abstract}

\section{Introduction}\label{sec:introduction}

Triangulations and plane perfect matchings are among the most fundamental types of graphs and have a huge variety of applications in different fields of mathematics and computer science.
A direct bijection between plane perfect matchings on $2n$ vertices in convex position and triangulations on $n\!+\!2$ points in convex position is presented in~\cite{AABV2017}.
We start by recalling some definitions and results from there.

We depict perfect matchings with two parallel rows of $n$ vertices each, labeled
$v_1$ to $v_{n}$ and $v_{n+1}$ to $v_{2n}$ in clockwise order, and with non-crossing edges; see \figurename~\ref{fig:basic_drawing}(left). 
To describe triangulations of convex $(n\!+\!2)$-gons we draw $n+2$ points in convex position, labeled $p_1$ to $p_{n+2}$ in clockwise order; see 
\figurename~\ref{fig:basic_drawing}(right). 
For the sake of distinguishability, throughout this paper we will refer to $p_1, \ldots, p_{n+2}$ as \emph{points} and to $v_1, \ldots, v_{2n}$ as \emph{vertices}.

\begin{figure}[htb]
	\centering\includegraphics[scale=0.75, page=1]{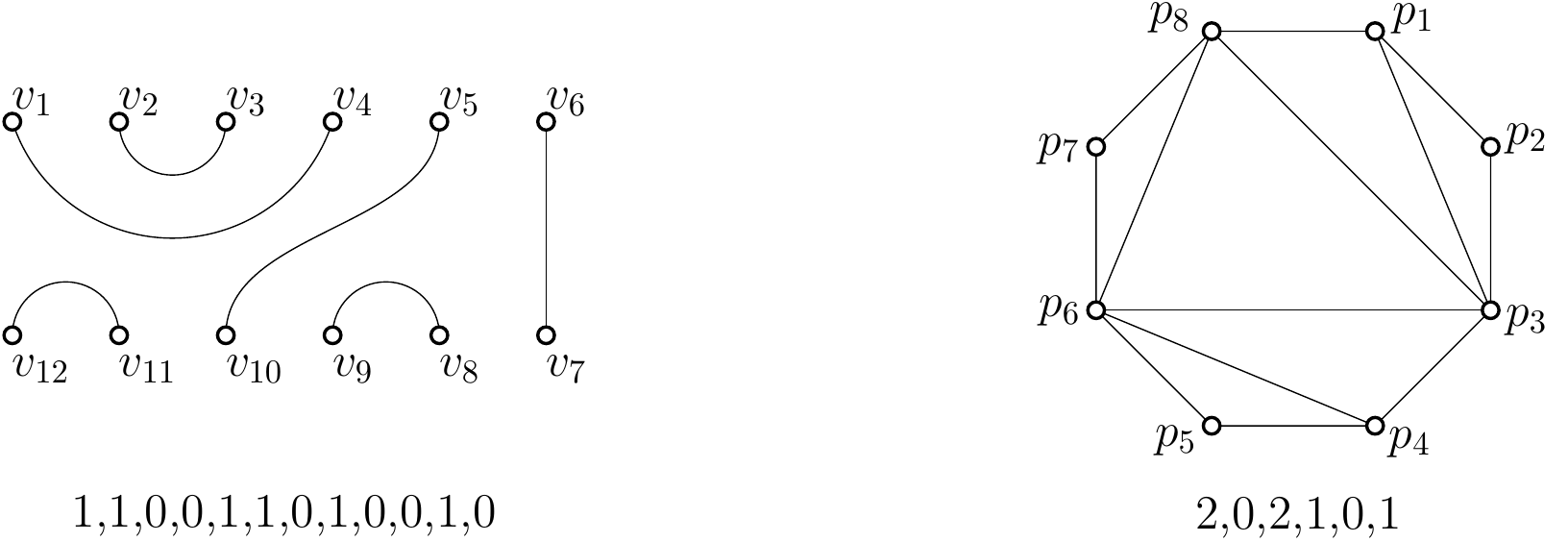}\\
	\caption{A perfect matching (left) and the corresponding triangulation for $n=6$ (right), together with their outdegree sequences.}
	\label{fig:basic_drawing}
\end{figure}

\noindent 
The above defined structures are undirected graphs. 
We next equip their edges (diagonals) with directions: 
an edge $v_iv_j$ ($p_ip_j$) is directed from $v_i$ to $v_j$ ($p_i$ to $p_j$) for $i<j$, that is, each edge is directed from the vertex / point with lower index to the vertex / point with higher index.
This also defines the outdegree of every vertex / point, which we denote as $b_i$ for each vertex $v_i$ and as $d_i$ for each point $p_i$. 
For technical reasons, 
we do not take into account the edges of the convex hull of a triangulation when computing the outdegree of a point $p_i$, with the exception of the edge $p_1p_{n+2}$.
For matchings, the outdegree sequence is a $0/1$-sequence with $2n$ digits, where $n$ digits are $1$ and $n$ digits are $0$. 
For triangulations, first note that the outdegrees of $p_{n+1}$ and $p_{n+2}$ are 0.
Thus we do not lose information when restricting the outdegree sequence of a triangulation to $(d_1,\ldots, d_n)$.
Recall that we do not consider the edges of the convex hull, except for $p_1p_{n+2}$, and thus the number of edges which contribute to the outdegree sequence is exactly $n$. 
We call the sequence $B(M)=B:=(b_1,\ldots, b_{2n})$ of the outdegrees of a matching $M$ (and the sequence $D(T)=D:=(d_1,\ldots, d_n)$ of the first $n$ outdegrees of a triangulation $T$) its \emph{outdegree sequence}; see again \figurename~\ref{fig:basic_drawing}. 
For both structures, this sequence is sufficient to encode the graph, as described in~\cite{AABV2017}.
The two types of outdegree sequences are sufficient to define a bijection between these two types of graphs:
For a given outdegree sequence $B=(b_1, \ldots, b_{2n})$ of a perfect matching,
for $i>0$ the outdegree $d_i$ for the corresponding point of the  triangulation is the number of 1s between the $(i-1)$-st $0$ and the $i$-th $0$ in~$B$. The degree $d_1$ is the number of 1s before the first 0 in~$B$.

\begin{figure}[htb]
	\centering\includegraphics[scale=0.75, page=2]{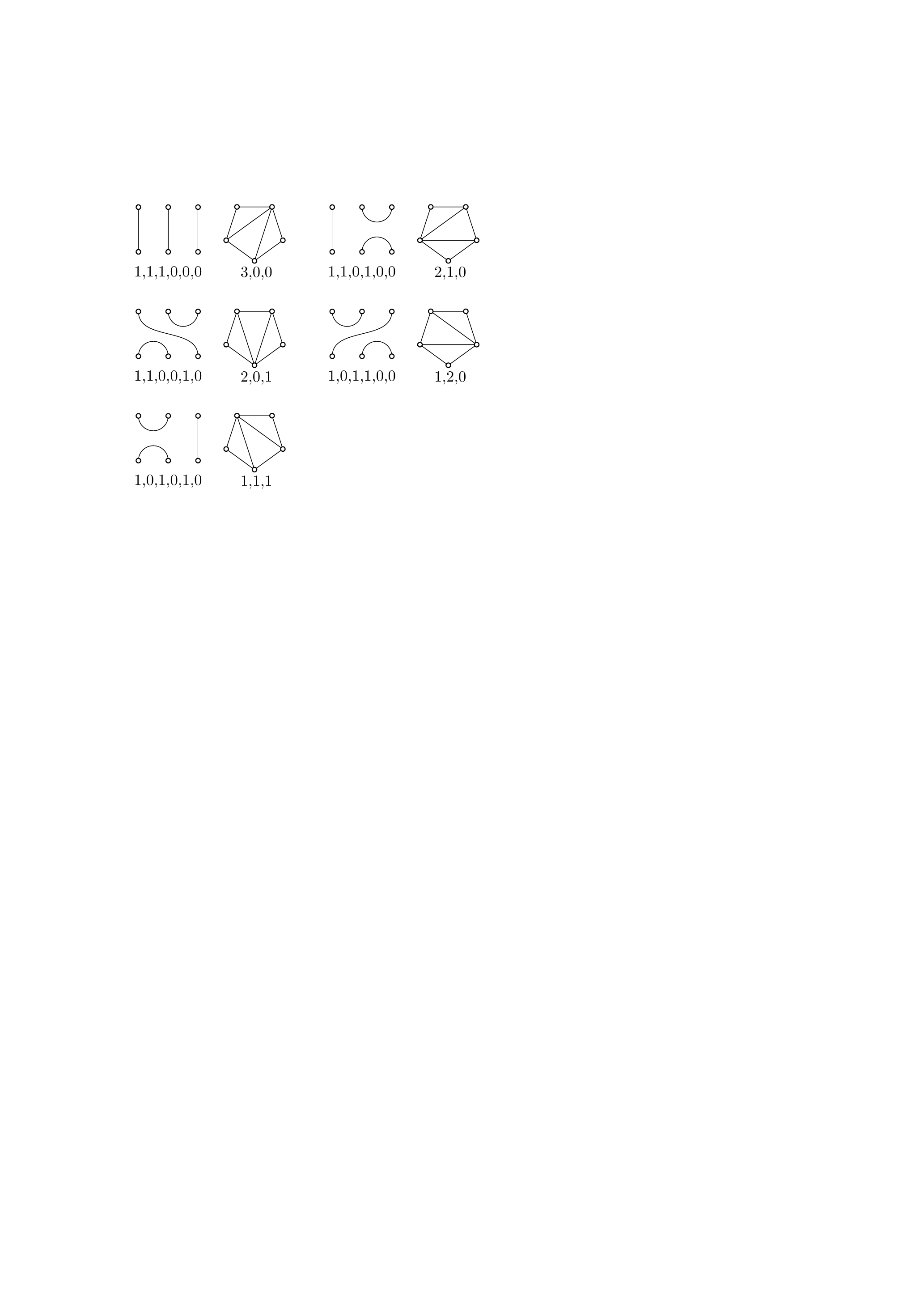}
	\caption{All perfect matchings, triangulations, and outdegree sequences for $n=3$.} 
	\label{fig:n5}
\end{figure}

Given an outdegree sequence $(d_1,\ldots,d_n)$, we obtain $(b_1,\ldots,b_n)$ as follows: for every entry $d_i$, we set the next $d_i$ consecutive elements (possibly none) of $B$ to $1$, then insert a $0$ and continue with $d_{i+1}$.
For a matching $M$, we denote the corresponding triangulation under the described bijection by $T_M$ and for a given triangulation $T$, the corresponding matching is denoted by $M_T$ throughout the article.

Temperley and Lieb introduced in \cite{TL1971} an algebra arising from a special kind of lattice models, which are a key ingredient in statistical mechanics.   
The Temperley-Lieb algebra $\TL_n(\alpha)$ is abstractly defined over a field $k$ by the generators $u_1, \ldots, u_{n-1}$ together with the identity $I$ and an element $\alpha \in k\setminus \{0\}$ obeying the relations

\begin{align}
u_i^2&=\alpha u_i, \ \ 1 \leq i \leq n-1\\
u_i u_j &= u_j u_i, \ \ |i-j| > 1, \  1\leq i,j\leq n-1 \\
u_i u_{i+1} u_i &= u_i,\ \ 1 \leq i \leq n-2 \\
u_{i+1}u_i u_{i+1}&=u_{i+1}, \ \ 1 \leq i \leq n-2.
\end{align} 

The basis of the algebra consists of all reduced words. For example, a basis of $\TL_3(\alpha)$ over the field $k$ is $\{I, u_1, u_2, u_1 u_2, u_2 u_1 \}$, independently of the element $\alpha$.
Kauffman introduced a pictorial representation of the Temperley-Lieb algebras in \cite{K1987}. Each generator corresponds to a plane perfect matching with $n$ vertices on top and bottom labelled $v_1, \ldots, v_n$ and $v_{n+1}, \ldots, v_{2n}$ in clockwise order.
The identity consists of $n$ (straight) propagating lines, the generator $u_i$, for $i=1,\ldots,n-1$, consists of $n-2$ (straight) propagating lines and two arcs between the pairs $(v_i,v_{i+1})$ and $(v_{2n-i},v_{2n-i+1})$ respectively, see \figurename~\ref{fig:algebraic_background_generators}.

\begin{figure}[htb]
	\centering\includegraphics[scale=0.75, page=1]{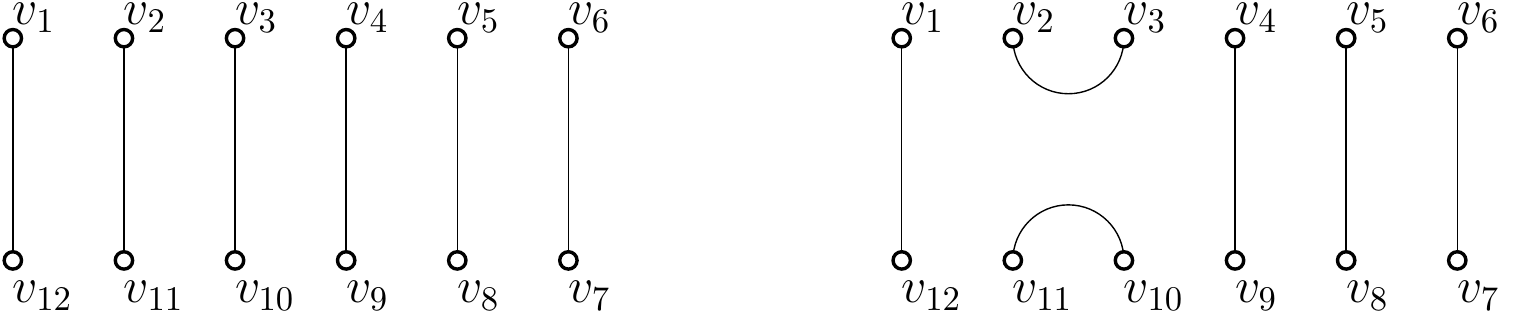}\\
	\caption{The identity $I$ (left) and one of the generators, $u_2$ (right), of $\TL_6(\alpha)$.}
	\label{fig:algebraic_background_generators}
\end{figure}

Products of generators of the algebra are obtained by concatenation of the corresponding matchings from top to bottom. Any loop arising from this is removed and replaced by a factor $\alpha$, e.g. $u_i u_i= \alpha u_i$, see \figurename~\ref{fig:algebraic_background_loops}.

\begin{figure}[htb]
	\centering\includegraphics[scale=0.75, page=3]{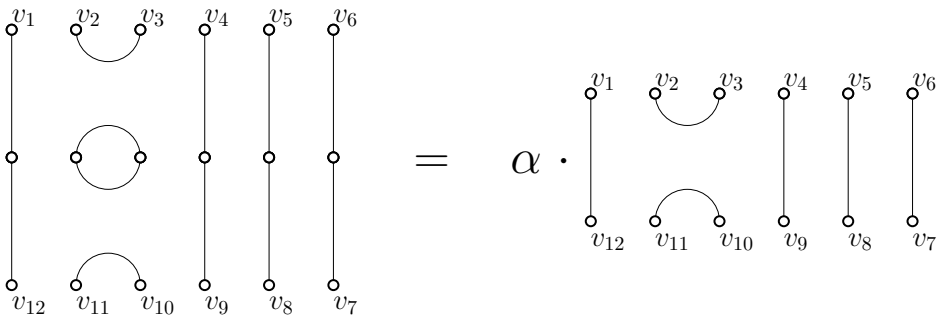}\\
	\caption{Loops are replaced by multiplication with the field element $\alpha$, here: $u_2^2=\alpha u_2$.}
	\label{fig:algebraic_background_loops}
\end{figure}

One can check that all the relations $(1)$-$(4)$ are satisfied.
It is a well known result that the dimension of $\TL_n(\alpha)$ is equal to $C_n=\tfrac{1}{n+1}\binom{2n}{n}$, the $n$-th Catalan number (see \cite{BJ1997} for an example). 
In this article, we consider the interplay between diagrams for $\TL_n(\alpha)$ per se and the generators and will from now on fix $\alpha=1$. We write $TL_n$ for $TL_n(1)$.

A classic operation triangulations and perfect matchings have in common are edge flips, see~\cite{BH2009} for a nice survey. For triangulations this means that a diagonal in a convex quadrilateral is replaced be the other diagonal. For perfect matchings two matching edges which induce a crossing-free 4-cycle are replaced be the other two edges of this cycle. Our goal is to determine the effect of a flip on one side of the bijection on the other side. 

The paper is organized as follows. First we characterize diagonal-flips of triangulations in terms of matchings in Section~\ref{sec:triangulation}. Then we give an algebraic interpretation of the flip graph of triangulations of an $(n\!+\!2)$-gon in terms of elements of the Temperley-Lieb algebra $TL_n$ in Section~\ref{sec:algebra}.  There we focus on the generators of $TL_n$, determining flip distances between them. In Section~\ref{sec:matching}, we define and focus on matching-flips and describe the change of the corresponding triangulations, and derive connections between these two types of flips.

\section{Flips in triangulations and their interpretation for matchings}
\label{sec:triangulation}

In this section we consider a basic transformation operation for triangulations, the so-called diagonal-flip~\cite{BH2009}.
While for triangulations this is a constant size operation, the impact on the corresponding matching is more involved.
In the following we show in detail how a diagonal-flip alters the matching.

\begin{defn}
	Let $T$ be a triangulation of a convex $(n\!+\!2)$-gon containing the
(convex) quadrilateral $p_ip_jp_kp_l$ as a sub graph, $1 \leq i < j <
k < l \leq n\!+\!2$.
A \textit{diagonal-flip} in $T$, is the
exchange of one of the two possible diagonals within this quadrilateral by the other.
\end{defn}
A diagonal-flip that exchagnes the diagonal $p_jp_l$ by the diagonal $p_ip_k$
decreases the outdegree $d_j$ by $1$ and increases the outdegree $d_i$ by $1$, as $1\leq i<j<k<l$; see
Figure~\ref{fig:triangulation_flip_1} for an example. 

\begin{figure}[htb]
	\centering \includegraphics[scale=0.75, page=1]{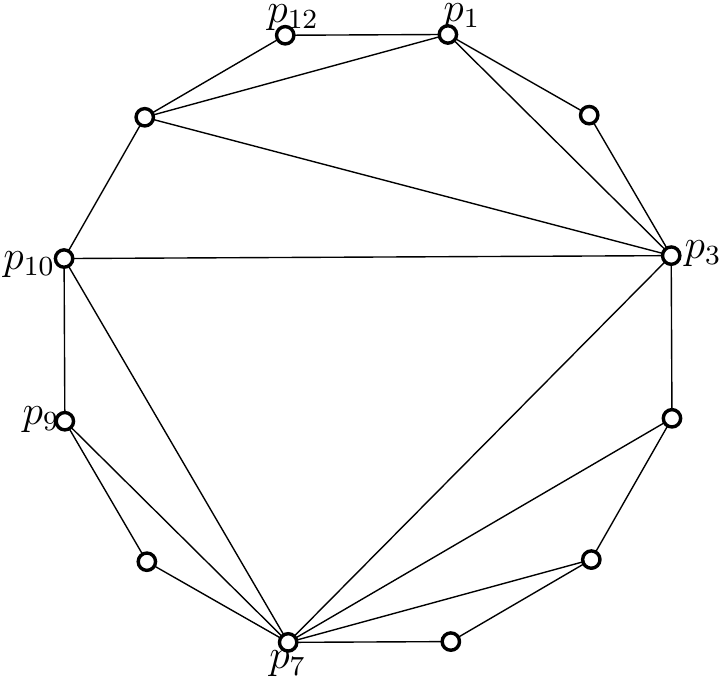}
	\hspace{15 ex}
	\centering \includegraphics[scale=0.75, page=2]{triangulation_flip}
	\caption{A flip in the quadrilateral $p_3p_7p_9p_{10}$ from $p_7p_{10}$ to $p_3p_9$ decreases $d_7$ and increases $d_3$.}
	\label{fig:triangulation_flip_1}
\end{figure} 

The change of the corresponding matching is illustrated in Figure~\ref{fig:triangulation_flip_2}.
Considering the outdegree sequences of these matchings, we see that an entry $1$  of the outdegree sequence is moved from, say, $v_q$ to a vertex $v_p$ with $q>p$. Moreover, all degrees between these two vertices are shifted by one position towards larger indices, see below for details and a more formal description.

\begin{figure}[htb]
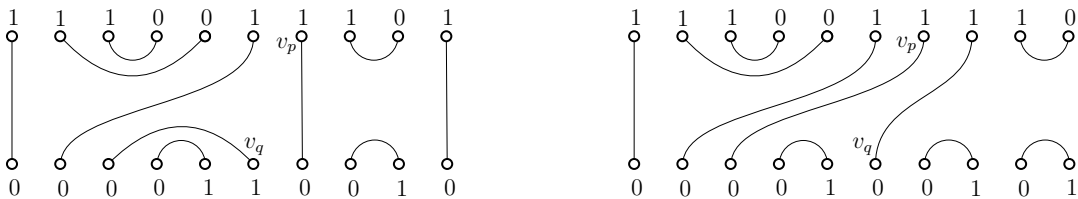

	\centering \includegraphics[scale=0.75, page=3]{triangulation_flip}
	\hspace{10 ex}
	\centering \includegraphics[scale=0.75, page=4]{triangulation_flip}
	\caption{The matchings which correspond to the triangulations of Figure \ref{fig:triangulation_flip_1}.}
	\label{fig:triangulation_flip_2}
\end{figure} 

In order to prove a general characterization of diagonal-flips of a triangulation of a polygon in terms of matchings, we introduce a more convenient presentation.

\begin{defn}
The \textit{single row presentation} of a perfect matching is obtained by drawing the $2n$ vertices
$v_1,\ldots,v_{2n}$ in one row and connecting
the pairs of the matching with non-crossing edges below these vertices.
\end{defn}

Note that the single row presentations are $(2n,n)$-link states as in the language of \cite{RS-A2014}.
To be able to distinguish between the two presentations of a perfect matching, we call edges of the single row presentation \textit{arcs}. 

Using the single row presentation, we can illustrate the effect of a
diagonal flip in terms of matchings; see
Figure \ref{fig:triangulation_flip_3} which corresponds to the diagonal-flip shown in Figure \ref{fig:triangulation_flip_1}.  The arc starting at $v_q$ (which is the right hand neighbor of the endpoint of the arc starting at $v_p$) is
removed, the part between $v_p$ (including) and $v_q$ (excluding), which consists of as many
$1$s as $0$s, is moved one step to the right. The removed arc is
reinserted starting at $v_p$ and ending at the same vertex as before. So the flip corresponds to an extension of the bold arc in Figure \ref{fig:triangulation_flip_3} over the subsequence drawn by dashed lines.

\begin{figure}[htb]
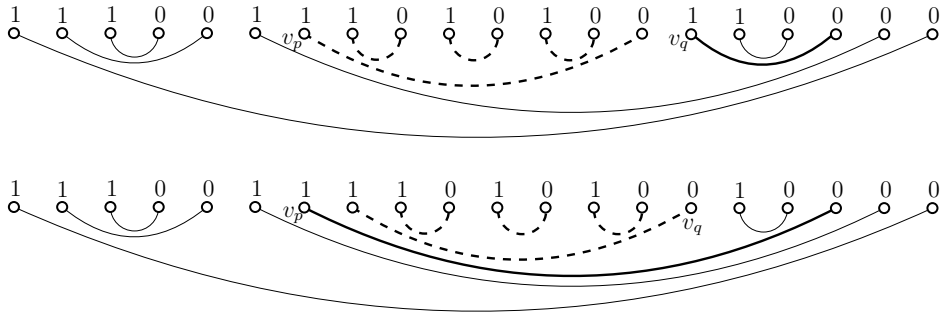

	\centering \includegraphics[scale=0.75, page=5]{triangulation_flip}\\
	\vspace{3 ex}
 	\centering \includegraphics[scale=0.75, page=6]{triangulation_flip}
	\caption{The single row presentations of the matchings of Figure \ref{fig:triangulation_flip_2}.}
	\label{fig:triangulation_flip_3}
\end{figure}  

To describe the effect of a diagonal-flip in general, we need the following definitions.

\begin{defn}In a single row presentation a \textit{section} is a part of an outdegree sequence starting with a $1$ and ending when the numbers of $1$s and $0$s (when going to the right) coincide for the first time. We will also call the collection of arcs corresponding to this outdegree sequence a section.
\end{defn} 

Every section is determined by the arc corresponding to the starting 1 and ending 0.

\begin{defn}
	Every vertex $v_p$ is the start or end vertex of an arc and hence determines a section through this arc. We will write $_{v_{p}}\mathcal{A}$ if $v_p$ is the starting vertex of section $\mathcal{A}$ and $\mathcal{A}_{v_p}$ if it is the ending vertex of a section $\mathcal{A}$.
\end{defn}

For example, each of the short dashed arcs in Figure \ref{fig:triangulation_flip_3} is a section, and the collection of the four dashed arcs forms also a section.

\begin{rmk} \label{rmk:section}
Let $\mathcal{A}$ be a section of the single row presentation of a matching $M$. In terms of the associated triangulation $T_M$, $\mathcal{A}$ corresponds to a union of a triangulated subpolygon with a diagonal (or the edge $p_1p_{n+2}$). This can be seen using the bijection from Section~\ref{sec:introduction}. 
For example, the section $_{v_p}\mathcal{A}$ in the first diagram of Figure \ref{fig:triangulation_flip_3} corresponds to the triangulated subpolygon on vertices $p_3,\ldots,p_7$ and the diagonal $p_3p_{10}$ in the first triangulation of Figure \ref{fig:triangulation_flip_1}.
\end{rmk}

\begin{defn}
	Consider a matching $M$ in its single row presentation and let $v_p$ be the starting vertex of some arc $\alpha$. \\
	Assume that $p>2$ and that $v_{p-1}$ is the ending vertex of a section $\mathcal{A}=\mathcal{A}_{v_{p-1}}$. Then we can combine $\mathcal{A}_{v_{p-1}}$ and the section $_{v_p}\mathcal{A}$ of $\alpha$ as in Figure \ref{fig:triangulation_flip_4}. We call this {\em a merge of $\alpha$ and $\mathcal{A}$}. Note that $\alpha$ already determines $\mathcal{A}$, so we also call the operation {\em a merge for $\alpha$}.\\
	If for $p>1$, $v_{p-1}$ is the starting vertex of some arc $\beta$, we can separate the section of $\alpha$ from the section of $\beta$ as shown in Figure \ref{fig:triangulation_flip_5}. We call this operation the {\em split of the section of $\alpha$ from the section of $\beta$}. Note that $\alpha$ already determines $\beta$. So we may call this operation the {\em split of $\alpha$}.
\end{defn}

\begin{figure}[htb]
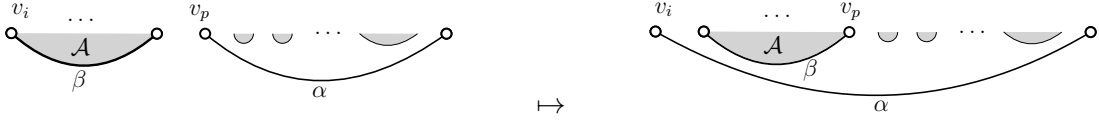
  
	\vspace{-1ex}
	\centering \includegraphics[scale=0.75, page=7]{triangulation_flip}
	\hspace{5 ex} $\mapsto$ \hspace{5 ex}
	\centering \includegraphics[scale=0.75, page=8]{triangulation_flip}
	\caption{Merging $\alpha$ and $\mathcal{A}v_{p-1}$. The section $\mathcal{A}$ determined by arc $\beta$ is moved one vertex to the right, $\alpha$ now starts at $v_i$.}
	\label{fig:triangulation_flip_4}
\end{figure}  
\begin{figure}[htb]
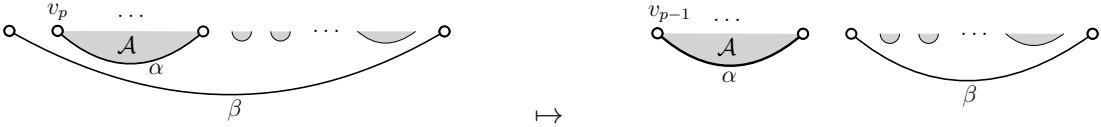

	\vspace{-1ex}
	\centering \includegraphics[scale=0.75, page=9]{triangulation_flip}
	\hspace{5 ex} $\mapsto$ \hspace{5 ex}
	\centering \includegraphics[scale=0.75, page=10]{triangulation_flip}
	\caption{Splitting the section defined by $\alpha$ from the section defined by $\beta$. }
	\label{fig:triangulation_flip_5}
\end{figure}

\begin{rmk}\label{rmk:split_merge}
The following properties are immediately implied by the definition of the split and merge operation.
\begin{itemize}
	\item[i)] Split and merge are inverse operations.
	\item[ii)] Every arc $\alpha=v_pv_q$ for $q>p>1$ determines exactly one split (split of $\alpha$) or one merge (merge of $\alpha$ and $\mathcal{A}$).
	\item[iii)] A merge of $\alpha$ and $\mathcal{A}$ moves section $\mathcal{A}$ by one index to the right so
	that it ends where the arc $\alpha$ started before the merge, and $\alpha$ starts
	now where $\mathcal{A}$ originally started; See Figure~\ref{fig:triangulation_flip_4}.
\end{itemize}
\end{rmk}

\begin{lem}\label{lem:flip_to_merge_and_split}
Every diagonal-flip in a triangulation $T$ corresponds to a merge or to a split in the single row presentation of $T$. 
\end{lem}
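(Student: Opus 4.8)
The plan is to carry out the proof on the level of outdegree sequences, where diagonal-flips, merges, and splits all have explicit descriptions. Consider a diagonal-flip inside a quadrilateral $p_ip_jp_kp_l$ with $i<j<k<l$; interchanging the roles of $T$ and $T'$ if necessary, we may assume it replaces $p_jp_l$ by $p_ip_k$, so that $d_j$ decreases and $d_i$ increases by $1$. By the bijection of Section~\ref{sec:introduction}, $B(M_T)=U\,1^{d_i}0\,Q\,1^{d_j}0\,V$ then becomes $B(M_{T'})=U\,1^{d_i+1}0\,Q\,1^{d_j-1}0\,V$, where $Q$ is the (balanced, possibly empty) concatenation of the blocks $i+1,\dots,j-1$ of $B(M_T)$ and $U,V$ are the untouched prefix and suffix. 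Since split and merge are inverse operations and the two directions of a diagonal-flip are inverse to each other, it suffices to realize this transformation as a merge; the reverse flip is then realized by the corresponding split.

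Next I would identify the arc on which the merge acts. Let $\alpha$ be the arc of $M_T$ that starts at the first $1$ of block $j$ of $B(M_T)$ — this $1$ exists since $d_j\ge1$ — and let $v_p$ be its starting vertex. Because $i<j$ we have $j\ge2$, and because $d_1\ge1$ (the hull edge $p_1p_{n+2}$ counts towards $d_1$) block $j$ begins at an index $\ge3$, so in particular $p>2$. The vertex $v_{p-1}$ is then the $(j-1)$-st zero of $B(M_T)$, hence the ending vertex of a section $\mathcal A:=\mathcal A_{v_{p-1}}$, so by Remark~\ref{rmk:split_merge} the arc $\alpha$ determines the merge of $\alpha$ and $\mathcal A$. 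The point is now to pin down $\mathcal A$: by Remark~\ref{rmk:section} it is the union of a triangulated subpolygon with a diagonal, and the hypothesis that the quadrilateral $p_ip_jp_kp_l$ is a subgraph of $T$ — concretely, that $p_ip_l$ is an edge and $p_ip_jp_l$ a face of $T$ — forces this subpolygon to be the one on $p_i,\dots,p_j$ and this diagonal to be $p_ip_l$. In particular the starting vertex of $\mathcal A$ lies in block $i$, and $\alpha$ encodes the diagonal $p_jp_l$.

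It remains to check that the merge of $\alpha$ and $\mathcal A$ realizes $B(M_T)\mapsto B(M_{T'})$. By Remark~\ref{rmk:split_merge}, this merge shifts $\mathcal A$ one index to the right and makes $\alpha$ start where $\mathcal A$ did; on the outdegree sequence it therefore deletes the first $1$ of block $j$ (so that block becomes $1^{d_j-1}0$), inserts a $1$ among the ones of block $i$ (so that block becomes $1^{d_i+1}0$), and merely translates the intervening piece $Q$, while $U$ and $V$ stay fixed — exactly the transformation from the first paragraph. Since the outdegree sequence determines the matching, $M_{T'}$ is obtained from $M_T$ by this merge, and the reverse flip by the inverse split. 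The main obstacle is the middle step, i.e.\ converting the geometric hypothesis ``$p_ip_jp_kp_l$ is a subgraph of $T$'' into the precise combinatorial statement about $\mathcal A$ (that it starts in block $i$): this is where Remark~\ref{rmk:section} does the real work, and everything around it is routine bookkeeping.
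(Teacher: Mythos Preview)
Your proof is correct and follows essentially the same route as the paper's: both argue at the level of outdegree sequences, observe that a diagonal-flip moves a single $1$ between blocks $i$ and $j$ while shifting the intermediate piece, and invoke Remark~\ref{rmk:section} to identify that intermediate piece as a section. The only cosmetic difference is orientation---the paper realizes the flip $p_ip_k\to p_jp_l$ as a split and deduces the merge, whereas you realize $p_jp_l\to p_ip_k$ as a merge and deduce the split---and your write-up is somewhat more explicit about locating the section $\mathcal{A}$ within block~$i$.
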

\begin{proof}
Every diagonal-flip in a triangulation $T$ decreases and increases the degrees $d_i$ and $d_j$ by $1$ while all other outdegrees remain unchanged. Using the transformation from the outdegree sequence of a triangulation, $D(T)$, to the outdegree sequence of the related matching, $B(M_T)$, as in Section \ref{sec:introduction}, it follows that a $1$ (the $1$ which comes from the edge $p_ip_l$), say for $v_p$, is moved to a vertex with higher index (say $v_q$, just before the vertex with degree $1$ corresponding to the first outgoing edge of $p_j$ or the $0$ in case we had $d_j=0$ before the flip). 

All outdegrees of the matching with index smaller than $p$ or larger than $q$ are not affected, and the outdegrees $b_{p+1}$ to $b_q$ have to be shifted by 1 to the left. So the diagonal flip from $p_ip_k$ to $p_jp_l$ is precisely the split operation described above. Note that also the number and order of $0$s and $1$s which are shifted form a section, as the edge $p_ip_j$ splits of a sub triangulation of $T$ which together with the edge $p_ip_k$ forms the shifted section (as stated in Remark \ref{rmk:section}). 

The reverse flip $p_jp_l$ to $p_ip_k$ consequently corresponds to the merge operation.
\end{proof}

\begin{lem}\label{lem:merge_and_split_to_flip}
Every split and every merge in a single row presentation of a matching $M$ corresponds to a flip in the triangulation $T_M$.
\end{lem}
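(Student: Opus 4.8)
The plan is to prove the converse direction of Lemma~\ref{lem:flip_to_merge_and_split} by reversing the argument given there, making essential use of the fact (Remark~\ref{rmk:split_merge}~i) that split and merge are mutually inverse. Since Lemma~\ref{lem:flip_to_merge_and_split} already shows that \emph{every} diagonal-flip yields a split or a merge, and since a flip can always be undone by the reverse flip, it suffices to show that every split (and every merge) arises from \emph{some} diagonal-flip; then the composition of the two lemmas identifies flips with splits/merges bijectively.

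First I would take a split in the single row presentation of $M$, determined by an arc $\alpha = v_pv_q$ with $v_{p-1}$ the starting vertex of an arc $\beta$ (using Remark~\ref{rmk:split_merge}~ii, which says $\alpha$ determines the operation). I would translate the local picture of the split (Figure~\ref{fig:triangulation_flip_5}) back through the bijection of Section~\ref{sec:introduction} into the outdegree sequence $B(M)$: the split moves the $1$ at position $p$ to some position $q' > p$ and shifts the intermediate entries, and by the definition of a section the shifted block has equally many $0$s and $1$s. Passing from $B(M)$ to $D(T_M)$ via the counting rule (number of $1$s between consecutive $0$s), this change decreases exactly one outdegree $d_j$ by $1$ and increases exactly one outdegree $d_i$ by $1$ with $i < j$, all other $d_k$ unchanged. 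Using Remark~\ref{rmk:section}, the section being moved corresponds to a triangulated subpolygon together with a diagonal (or the hull edge $p_1p_{n+2}$), which pins down the quadrilateral $p_ip_jp_kp_l$ in $T_M$ whose diagonal $p_ip_k$ gets exchanged for $p_jp_l$; this exchange is a legal diagonal-flip precisely because that quadrilateral appears as a subgraph of $T_M$. Hence the split equals this diagonal-flip on the triangulation side.

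For a merge for $\alpha = v_pv_q$ (with $v_{p-1}$ the ending vertex of a section $\mathcal{A}$), I would invoke Remark~\ref{rmk:split_merge}~i: the merge is the inverse of a split, so it corresponds to the reverse flip of the flip produced in the previous paragraph, which is again a diagonal-flip in $T_M$ (within the same quadrilateral). Alternatively one runs the same outdegree-sequence computation directly, noting via Remark~\ref{rmk:split_merge}~iii that the merge raises one $d_i$ and lowers one $d_j$ by exactly $1$.

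The main obstacle I expect is purely bookkeeping: verifying carefully that the quadrilateral identified from the section structure really is present in $T_M$ as a subgraph (so that the claimed exchange is an admissible diagonal-flip and not a non-flip modification), and that the indices $i<j<k<l$ coming out of the outdegree-sequence translation match the endpoints of $\alpha$, $\beta$, and the section $\mathcal{A}$. Everything else is a routine unwinding of the bijection already spelled out in Section~\ref{sec:introduction} and in the proof of Lemma~\ref{lem:flip_to_merge_and_split}, together with the inverse-operation remark.
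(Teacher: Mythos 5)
Your proposal is correct and takes essentially the same route as the paper, whose proof is the one-line observation that all arguments of Lemma~\ref{lem:flip_to_merge_and_split} work in both directions --- exactly the reversal you spell out (with the merge case handled via the inverse-operation property of Remark~\ref{rmk:split_merge}). Only a harmless labelling slip: since a split moves the relevant $1$ to a \emph{higher} index, it decreases $d_i$ and increases $d_j$ with $i<j$, which is what is consistent with the exchange of $p_ip_k$ for $p_jp_l$ that you correctly identify.
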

\begin{proof}\label{lem:split_to_flip}
All arguments of the proof of Lemma~\ref{lem:flip_to_merge_and_split} work in both directions.
\end{proof}

The preceeding two lemmata imply the following corollary.

\begin{cor}\label{cor:triangulation_flip}
	Every flip of a diagonal of a given triangulation of the $(n\!+\!2)$-gon corresponds to either a merge or split of one of the arcs $\alpha$ in the matching, where the index of the starting vertex of $\alpha$ is greater than one. The reverse also holds. 
\end{cor}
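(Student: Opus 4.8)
The plan is to observe that Corollary~\ref{cor:triangulation_flip} is essentially a direct restatement of the two preceding lemmata, and the proof should simply package them together while making the quantifier over arcs explicit. First I would invoke Lemma~\ref{lem:flip_to_merge_and_split}: any diagonal-flip in a triangulation $T$ of the $(n\!+\!2)$-gon corresponds, in the single row presentation of $M_T$, to a merge or a split. The only extra bookkeeping needed is to identify \emph{which} arc the operation is attached to and to verify the index condition. By Remark~\ref{rmk:split_merge}(ii), every arc $\alpha = v_pv_q$ with $q > p > 1$ determines exactly one split or one merge, so the merge/split produced by Lemma~\ref{lem:flip_to_merge_and_split} is the one associated with some arc $\alpha$ whose starting vertex $v_p$ satisfies $p > 1$; I would point out that the edge $p_ip_l$ (equivalently the arc whose endpoint moves) plays the role of $\alpha$, and that $p>1$ holds because the $1$ being moved corresponds to a genuine outgoing diagonal $p_ip_l$ with $i \geq 1$, which under the bijection never lands in the very first position of $B$ that would force $p=1$ (that position being reserved, and more simply: the merge/split definitions themselves require $p>1$, and Lemma~\ref{lem:flip_to_merge_and_split} already established that a flip lands in that regime).

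For the reverse direction I would invoke Lemma~\ref{lem:merge_and_split_to_flip}: every split and every merge in the single row presentation of $M$ corresponds to a flip in $T_M$. Combined with Remark~\ref{rmk:split_merge}(ii), which says merges and splits are exactly indexed by arcs $\alpha = v_pv_q$ with $p>1$, this gives that each such arc yields, via its merge or split, a well-defined flip of a diagonal of $T_M$. Since $M \mapsto T_M$ and $T \mapsto M_T$ are mutually inverse (the bijection of Section~\ref{sec:introduction}), the two directions together establish the claimed correspondence in both directions.

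The only point requiring a little care — and the closest thing to an obstacle — is the boundary case: checking that the index of the starting vertex of $\alpha$ is indeed always strictly greater than one, and conversely that arcs starting at $v_1$ do not correspond to any flip. The arc starting at $v_1$ corresponds (under the bijection) to the outdegree contribution at the very beginning of $B$, i.e. to diagonals incident to $p_1$ of the form $p_1p_k$ that are "outermost"; flipping such a diagonal would require a quadrilateral $p_ip_jp_kp_l$ with $i<1$, which is impossible, so no split is defined there. I would spell this out in one sentence to justify the hypothesis "$p>1$" appearing in the statement, and then simply conclude that Lemmata~\ref{lem:flip_to_merge_and_split} and \ref{lem:merge_and_split_to_flip} together with Remark~\ref{rmk:split_merge}(ii) give the corollary. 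Since everything reduces to already-proven statements, I expect the proof to be two or three sentences long.
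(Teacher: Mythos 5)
Your proposal is correct and follows essentially the same route as the paper, which derives the corollary directly from Lemma~\ref{lem:flip_to_merge_and_split} and Lemma~\ref{lem:merge_and_split_to_flip} (with Remark~\ref{rmk:split_merge}(ii) implicitly handling the indexing of merges/splits by arcs with starting index greater than one). Your extra sentence on the boundary case, identifying the arc starting at $v_1$ with the non-flippable hull edge $p_1p_{n+2}$, is a harmless and accurate elaboration of what the paper leaves implicit.
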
	

\begin{rmk}
	As the single row presentation consists of $n$ arcs, and $n-1$ of them can perform either a merge or a flip, we obtain a well known result: every triangulation of an $(n\!+\!2)$-gon has $n\!-\!1$ neighbors in the diagonal-flip graph.
\end{rmk}

\begin{rmk}
It was brought to the authors' attention, that the result of Corollary \ref{cor:triangulation_flip} are related to the theory of Tamari lattices and Dyck paths. The former was introduced in \cite{Ta1962}; see \cite{Mu2012} for a recent survey. However, the known operations on Dyck paths are different from the operations merge and split defined above. We will study this connection in upcoming work.  
\end{rmk}

In Section~\ref{sec:matching} we will consider the reverse, namely the impact of a flip in a matching (exchanging a pair of matching edges) on the related triangulation. Before that, we will use the obtained insight on the relation between diagonal-flips and changes in a matching in the next section to get bounds on the flip distance between generators for  the Temperley-Lieb algebra~$TL_n$.

\section{Towards  an algebraic interpretation of the triangulation flip graph}
\label{sec:algebra}
For this section, let $n>0$ be fixed.

\subsection{Generators as triangulations}\label{sec:triangulation_for_generators}

The generators $I$ and $u_i$, $1\leq i \leq n\!-\!1$ of the Temperley-Lieb algebra $TL_n$ correspond to particular triangulations of the $(n\!+\!2)$-gon following the bijection from \cite{AABV2017}.

\begin{prop}
	The identity $I$ corresponds to the fan triangulation at point $p_1$ (every diagonal is incident with $p_1$), and the $u_i$ consist of the following diagonals:
	\begin{enumerate}
		\item[(1)] The edge $e_i=p_{n-i+1}p_{n-i+3}$
		\item[(2)] The edge $f_i=p_2p_{n-i+3}$
		\item[(3)] A fan of cardinality $i\!-\!1$ at $p_1$, consisting of the diagonals $p_1p_j$, $n-i+3 \leq j \leq n+1$.
		\item[(4)] A fan of cardinality $n\!-\!i\!-2$ at $p_2$, consisting of the diagonals $p_2p_j$, $4\leq j \leq n-i+1$.
	\end{enumerate} 
	For $i=1$ and $i=n-2$, the fan at $p_1$ and $p_2$, respectively, consists of $0$ edges and for $i=n\!-\!1$, the edges $e_{n-1}$ and $f_{n-1}$ coincide.
\end{prop}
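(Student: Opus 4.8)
The plan is to run everything through the explicit bijection of Section~\ref{sec:introduction}: for each of $I$ and $u_i$ I read off the associated matching from Kauffman's diagram, compute its outdegree sequence, push that sequence through the bijection to get the outdegree sequence of the corresponding triangulation, and then check that the triangulation written down in the statement has exactly that outdegree sequence. Since an outdegree sequence determines the triangulation (recalled in Section~\ref{sec:introduction}), this identifies $T_I$ and $T_{u_i}$.

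First I translate the diagrams into the two-row drawing. In the clockwise labelling the $k$-th column carries $v_k$ on top and $v_{2n+1-k}$ on the bottom, so the $n$ propagating lines of $I$ form the matching $\{v_kv_{2n+1-k}:1\le k\le n\}$; orienting every arc from the smaller to the larger index gives $B(M_I)=(1,\dots,1,0,\dots,0)$ with $n$ ones then $n$ zeros, and the decoding rule (where $d_j$ is the number of $1$s between the $(j-1)$st and the $j$th $0$, and $d_1$ the number of $1$s before the first $0$) returns $D(T_I)=(n,0,\dots,0)$. For $u_i$ the matching is the arc $v_iv_{i+1}$, the arc $v_{2n-i}v_{2n-i+1}$, and the $n-2$ propagating edges $v_kv_{2n+1-k}$ with $k\notin\{i,i+1\}$; the only out-vertex in the bottom row is $v_{2n-i}$, which locates the single isolated $1$ in the tail of the word, giving
\[ B(M_{u_i})=(\underbrace{1,\dots,1}_{i},\,0,\,\underbrace{1,\dots,1}_{n-i-1},\,\underbrace{0,\dots,0}_{n-i-1},\,1,\,\underbrace{0,\dots,0}_{i}) \]
and hence $D(T_{u_i})=(i,\,n-i-1,\,0,\dots,0,\,1,\,0,\dots,0)$ with the trailing $1$ in position $n-i+1$ (for $i=n-1$ the two nonzero tail entries merge into $(n-1,1,0,\dots,0)$).

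Next I verify the triangulations in the statement. The fan at $p_1$ has the $n-1$ diagonals $p_1p_3,\dots,p_1p_{n+1}$ and, in addition, the hull edge $p_1p_{n+2}$ (which counts toward $d_1$), so $d_1=n$ and all other outdegrees vanish; this matches $D(T_I)$, and the fan is plainly a triangulation. For $u_i$ I first check that the $n-1$ listed diagonals are pairwise non-crossing: $f_i=p_2p_{n-i+3}$ splits the $(n\!+\!2)$-gon into the subpolygon on $p_2,\dots,p_{n-i+3}$ — triangulated by the fan $p_2p_4,\dots,p_2p_{n-i+1}$ together with $e_i=p_{n-i+1}p_{n-i+3}$ — and the subpolygon on $p_{n-i+3},\dots,p_{n+2},p_1,p_2$, triangulated by the fan $p_1p_{n-i+3},\dots,p_1p_{n+1}$; so the $n-1$ diagonals do form a triangulation (a convex $(n\!+\!2)$-gon has exactly $n-1$ diagonals). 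Counting out-edges then gives $d_1=i$ (the $i-1$ fan diagonals at $p_1$ plus $p_1p_{n+2}$), $d_2=n-i-1$ (the $n-i-2$ fan diagonals at $p_2$ plus $f_i$), $d_{n-i+1}=1$ (the diagonal $e_i$), and $d_j=0$ otherwise, i.e.\ exactly $D(T_{u_i})$; by uniqueness of the encoding this triangulation is $T_{u_i}$. Finally I check the degenerate values $i=1$, $i=n-2$, $i=n-1$ by hand, confirming that the fans become empty and that $e_{n-1}$ and $f_{n-1}$ both equal $p_2p_4$.

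The conceptual ingredient — that the outdegree sequence encodes the triangulation — is already available, so I expect no genuine obstacle; the only delicate part is index bookkeeping: keeping the off-by-one corrections straight when moving between the two row labellings and between single-row and polygon indices, and verifying the non-crossing claim together with the outdegree count for the explicit diagonal set of $u_i$ across all the boundary values of $i$.
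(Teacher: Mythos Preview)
Your proof is correct and follows essentially the same approach as the paper's: both compute the outdegree sequence $B(M_{u_i})=(1^i,0,1^{n-i-1},0^{n-i-1},1,0^i)$ of the Kauffman diagram and then identify the diagonals via the bijection. Your version is more explicit---you push the sequence all the way through to $D(T_{u_i})$, verify that the listed diagonals are non-crossing, and check the boundary cases separately---whereas the paper argues more tersely by assigning each $1$ in $B(M_{u_i})$ directly to a diagonal; but the underlying argument is the same.
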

\begin{proof}
	The correspondence follows immediately by using the bijection from \cite{AABV2017}: The outdegree sequence of the matching corresponding to the generator $u_i$ is symmetric and consists of $i$ times 1, followed by a single 0, $(n-i-1)$ ones, then $(n-i-1)$ times 0, followed by a single 1 and $i$ times 0. Beside the first 1, which corresponds to the edge $p_1p_{n+2}$ of the boundary of the convex hull of the triangulation, each 1 corresponds to a diagonal. The last 1 determines the diagonal $e_i$, the $(n-i-1)$ ones correspond to diagonals involving $p_2$ (fan and diagonal $f_i$), and the other $(i-1)$ 1s correspond to  diagonals at point $p_1$. 
\end{proof}

Figure \ref{fig:triangulation_of_generators_4} shows the generator $u_4$ of $TL_{10}$ as an example. The diagonal $p_7p_9=e_4$, a fingerprint of the triangulation, in the sense that all the other diagonals of a generator are determined immediately (using the list above) once the diagonal $e_i$ is drawn. The remaining diagonals are $p_2p_9=f_4$ and the two fans at $p_1$ and $p_2$ consisting of $2$ and $3$ diagonals, respectively.

\begin{figure}[htb]
	\centering\includegraphics[scale=0.75, page=4]{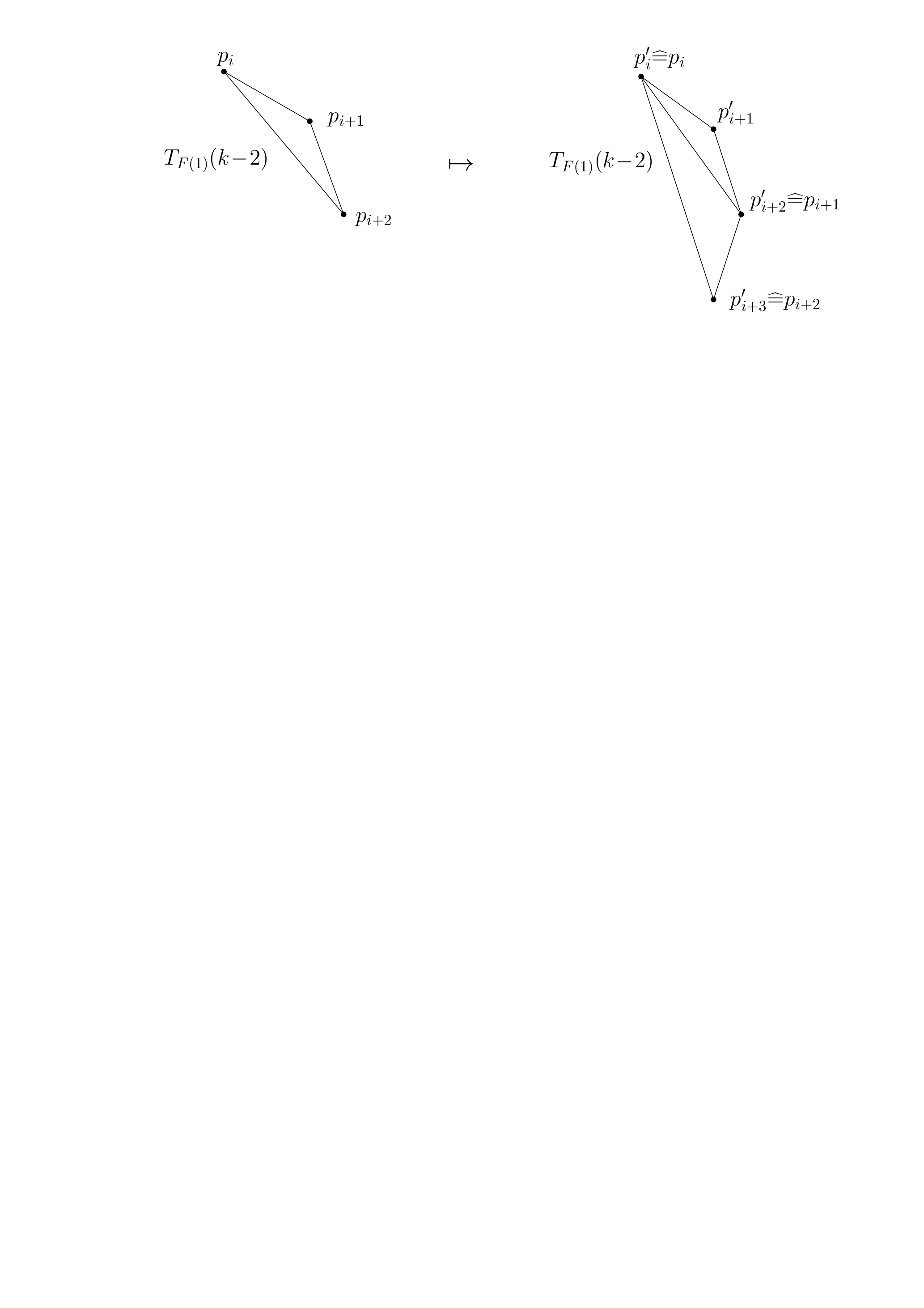}
	\caption{The triangulation of the generator $u_4$ of $TL_{10}$.}
	\label{fig:triangulation_of_generators_4}
\end{figure}

\subsection{Flip distance between generators}
\label{sec:flip_distance_of_generators}

In this section we consider the flip distance $d(u_i,u_j)$ between each pair $u_i$, $u_j$ of generators of the Temperley-Lieb algebra $TL_{n}$ in terms of diagonal flips in the corresponding triangulations. To show that the distances are optimal we will use the following lemma on crossings between diagonals of the convex $n$-gon. Note that two diagonals $p_ip_j$ and $p_kp_l$ with $i<j$ and $k<l$ cross if and only if either $i<k<j<l$ or $k<i<l<j$.

\begin{lem}
\label{lem:crossing}
Let $T_1$ and $T_2$ be two triangulations of a convex $n$-gon. Let $G$ be the graph which contains all the diagonals of $T_1$ and $T_2$ and let $k$ be the number of diagonals of $T_1$ which are crossed by at least one diagonal of $T_2$ in $G$. Then the flip distance between $T_1$ and $T_2$ is at least~$k$.
\end{lem}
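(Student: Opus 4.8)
The plan is to argue that a single diagonal flip can "resolve" at most one crossing between a diagonal of $T_1$ and a diagonal of $T_2$. More precisely, I want to track, along any sequence of flips transforming $T_1$ into $T_2$, the set of diagonals of $T_1$ that are still present and are crossed (in $G$) by some diagonal of $T_2$; call these the \emph{bad diagonals}. Initially there are $k$ bad diagonals, and at the end (in $T_2$) there are none, since $T_2$ shares no diagonal with itself that crosses another diagonal of $T_2$ — any diagonal of $T_2$ is crossing-free with respect to $T_2$. So it suffices to show that each flip decreases the number of bad diagonals by at most one.

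First I would set up the key observation: when we flip a diagonal $e$ of the current triangulation to a diagonal $e'$, the only diagonal that is \emph{removed} is $e$, and the only diagonal that is \emph{added} is $e'$. A diagonal $d$ of $T_1$ that is currently present remains present unless $d=e$. Hence the set of present diagonals of $T_1$ loses at most the single element $e$. Consequently the set of bad diagonals can only shrink by losing $e$ (if $e$ happened to be a bad diagonal of $T_1$); no other present diagonal of $T_1$ can stop being bad, because being bad depends only on whether it is crossed by \emph{some} diagonal of $T_2$, and that property of a fixed diagonal $d$ is determined once and for all by $d$ and $T_2$ — it does not change as we flip. Adding $e'$ cannot remove a bad diagonal either; at worst $e'$ is itself a diagonal of $T_1$ that becomes present again, which could only increase the count. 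Therefore each flip reduces the number of bad diagonals by at most $1$, and since we start at $k$ and must reach $0$, at least $k$ flips are required.

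The one point that needs a little care — and which I expect to be the only real subtlety — is the claim that in $T_2$ there are zero bad diagonals, i.e. that every diagonal of $T_1$ still present in $T_2$ is crossing-free with respect to all diagonals of $T_2$. This holds simply because such a diagonal is a diagonal of the triangulation $T_2$, and no two diagonals of one triangulation cross; so it is crossed by no diagonal of $T_2$ in $G$. A second small point is to make sure the definition of "bad" is anchored to the \emph{original} diagonals of $T_1$ and to $T_2$, not to the current triangulation in the sequence, so that the only way a diagonal leaves the bad set is by being flipped away. With these observations in place the argument is a clean monovariant/counting argument, and no case analysis on the geometry of crossings is needed beyond the stated criterion for when two diagonals cross.
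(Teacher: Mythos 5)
Your proposal is correct and is essentially the paper's argument: every diagonal of $T_1$ crossed in $G$ cannot survive into $T_2$, and since each flip removes only one diagonal, at least $k$ flips are needed. Your monovariant formulation just spells out (and cleanly handles re-insertions in) the same counting idea the paper states in two sentences.
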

\begin{proof}
Every diagonal from $T_1$ which is crossed in $G$ needs to be removed in the flip sequence before we can obtain $T_2$. As every flip changes only one diagonal this requires at least $k$ flips.
\end{proof}

Note that this bound will usually not be tight and that in general $k$ will be much smaller than the number of crossings. Moreover the role of $T_1$ and $T_2$ are symmetric in the sense that, despite the fact that in $G$ one diagonal might be crossed by several other diagonals,  the number of diagonals in $T_1$ which are crossed by diagonals in $T_2$ is the same as the number of diagonals in $T_2$ which are crossed by diagonals in $T_1$. 

\begin{prop}\label{prop:flip_distance_of_generators}
The minimal number 
of flips between the triangulations of two generators~is
	\begin{align*}
	d(u_i,u_{i+1}) &=\left\{ \begin{array}{ll} 3 & \text{ if} \ i \neq n-2 \\
	2  & \text{ if} \ i =n-2 \end{array} \right. \\
	d(u_i,u_{i+k}) &=\left\{ \begin{array}{ll} k+1 & \text{ if} \ i \neq n-k-1 \\
	k  & \text{ if} \ i =n-k-1 \end{array} \right. \\
	d(u_i,u_{i-1}) &=\left\{ \begin{array}{ll} 3 & \text{ if} \ i \neq n-1 \\
	2  & \text{ if} \ i =n-1 \end{array} \right. \\
	d(u_i,u_{i-k}) &=\left\{ \begin{array}{ll} k+1 & \text{ if} \ i \neq n-1 \\
	k  & \text{ if} \ i =n-1 \end{array} \right.,
	\end{align*}
	where $1 \leq i \leq n-1$ ,  $k \geq 2$ and all indices are positive integers.
\end{prop}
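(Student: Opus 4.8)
The plan is to prove the four distance formulas by establishing matching upper and lower bounds. For the upper bounds I would exhibit explicit flip sequences between the triangulations of $u_i$ and $u_{i\pm k}$ using the structural description from the Proposition above (the diagonals $e_i=p_{n-i+1}p_{n-i+3}$, $f_i=p_2p_{n-i+3}$, and the two fans at $p_1$ and $p_2$). The key observation is that passing from $u_i$ to $u_{i+1}$ amounts to sliding the ``fingerprint'' diagonal $e_i$ one step and adjusting the two fans accordingly: one diagonal leaves the $p_2$-fan, one diagonal enters the $p_1$-fan, and $e_i, f_i$ move. I would check that this can be realized in $3$ diagonal flips generically, and in $2$ flips in the boundary case $i=n-2$ (where the $p_2$-fan is already empty, so there is one less diagonal to reroute) and similarly $i=n-1$ (where $e_{n-1}=f_{n-1}$ coincide). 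For the jump $u_i \to u_{i+k}$ I would argue that after the first ``setup'' flip the intermediate diagonals can be cleared one at a time, giving $k+1$ flips in general and $k$ in the boundary case; concatenating the single-step sequences would give $3k$, so the content here is that one can do much better by flipping directly toward the target rather than visiting each $u_{i+j}$.

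For the lower bounds I would invoke Lemma~\ref{lem:crossing}: form the graph $G$ containing all diagonals of $T_{u_i}$ and $T_{u_{i+k}}$, and count how many diagonals of $T_{u_i}$ are crossed by some diagonal of $T_{u_{i+k}}$. Using the crossing criterion ($p_ap_b$ and $p_cp_d$ cross iff $a<c<b<d$ or $c<a<d<b$) together with the explicit diagonal lists, I would identify exactly which of $e_i$, $f_i$, and the fan diagonals of $u_i$ are crossed by diagonals of $u_{i+k}$; this should yield a count of $k+1$ in the generic case and $k$ in the boundary case, matching the upper bound. The cases $u_i \to u_{i-k}$ are handled symmetrically, noting from the Proposition's description that decreasing the index behaves like a mirror image (the fan at $p_1$ shrinks, the fan at $p_2$ grows), with the special case now occurring at $i=n-1$.

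The main obstacle I anticipate is bookkeeping in the boundary cases: the fans at $p_1$ and $p_2$ degenerate for $i=1$, $i=n-2$, $i=n-1$, and one must be careful that the explicit flip sequences are still valid (the quadrilaterals used for each flip actually exist in the current triangulation) and that the crossing count in Lemma~\ref{lem:crossing} really drops by exactly one. A secondary subtlety is making sure the upper-bound sequence for a jump of size $k$ does not accidentally undo progress — i.e.\ that each intermediate diagonal is removed exactly once and never reintroduced — which I would handle by always flipping in the quadrilateral bounded by $e_i$ (or its current image) and peeling off the fan diagonals in order of decreasing (resp.\ increasing) index. Once the per-step picture is nailed down, both the general formula and its specialization to single steps ($k=1$) follow as corollaries, so I would present the $k\ge 2$ argument first and then note $d(u_i,u_{i\pm 1})$ as the $k=1$ instance with its own slightly different boundary index.
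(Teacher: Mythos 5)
Your plan follows essentially the same route as the paper's proof: upper bounds via explicit flip sequences built from the diagonals $e_i$, $f_i$ and the two fans at $p_1$ and $p_2$, lower bounds via the crossing count of Lemma~\ref{lem:crossing}, and symmetry ($d(u_i,u_j)=d(u_j,u_i)$) to reduce the decreasing-index cases to the increasing ones. The boundary cases you flag ($i=n-2$, resp.\ $i=n-k-1$, where $e$ and $f$ coincide in the target and the $p_2$-fan degenerates) are exactly the ones the paper treats separately, so the proposal is sound and matches the published argument.
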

\begin{proof} 
First observe that any flip sequence in a triangulation can be reversed by just performing the flips in inverted order, i.e. $d(u_i,u_j) = d(u_j,u_i)$ for all $i\neq j$. Thus we only have to consider the first two cases. 

The key idea to obtain the exact numbers of flips between triangulations of generators is to consider the "special" diagonal $e_i$, which, as mentioned above, defines the triangulation of a generator. Furthermore, by decreasing and increasing the fans of $p_1$ and $p_2$ accordingly, the majority of flips will already be done. To argue that our bounds are best possible we will use the statement of Lemma~\ref{lem:crossing}. Similar as in this lemma we say that two diagonals cross if they cross in the joint graph of the two triangulations of the two generators.

 We consider the two cases separately.
\begin{description}
\itemsep0pt
\item[$d(u_i,u_{i+1})$]  If $i=n-2$, we have to flip $e_{n-2}$ and then $f_{n-2}$. If $i\neq n-2$, we flip $e_{i}$, then $f_{i}$ and finally the diagonal $p_2p_{n-i+3}$ to the diagonal $p_{n-i+2}p_{n-i+4}=e_{i+1}$. In both cases the number of flips are optimal by Lemma~\ref{lem:crossing} as at the beginning there exist 2 (respectively 3) diagonals which are crossed by edges of the final triangulation.
\item[$d(u_i,u_{i+k})$] If $i \neq n-k-1$ then first flip $f_i$ followed by the flip of $e_i$, both increasing the fan at $p_1$. Then flip $k-2$ diagonals of the fan of $p_2$, in the order $p_2p_{n-i+1},p_2p_{n-i},\ldots p_2p_{n-i-k+4}$. Finally flip $p_{2}p_{n-i+k+2}$ to $p_{n-i+k+1}p_{n-i+k+3}=e_{i+k}$. This is a sequence of $k+1$ flips. To see that this is optimal consider all edges of the start triangulation which are crossed. These are $e_i$ and $f_i$, the $k-2$ diagonals which are flipped from the fan at $p_2$ to the fan at $p_1$ and the edge $p_{2}p_{n-i+k+2}$ which gets flipped to $e_{i+k}$, see Figure \ref{fig:flip_sequence_generators_minimality}.
In total this gives $k+1$ edges, and thus Lemma~\ref{lem:crossing} implies that the sequence is minimal.

 If $i=n-k-1$, then the last flip from $p_{2}p_{n-i+k+2}$ to $e_{i+k}$ is not necessary, as $f_{n-1}=e_{n-1}$. Thus the length of the flip sequence is $k$, and optimality can be argued as before. 
\end{description}
	\vspace{-4.8ex}
\end{proof}
	
\begin{figure}[htb]  
		\centering \includegraphics[scale=0.75, page=3]{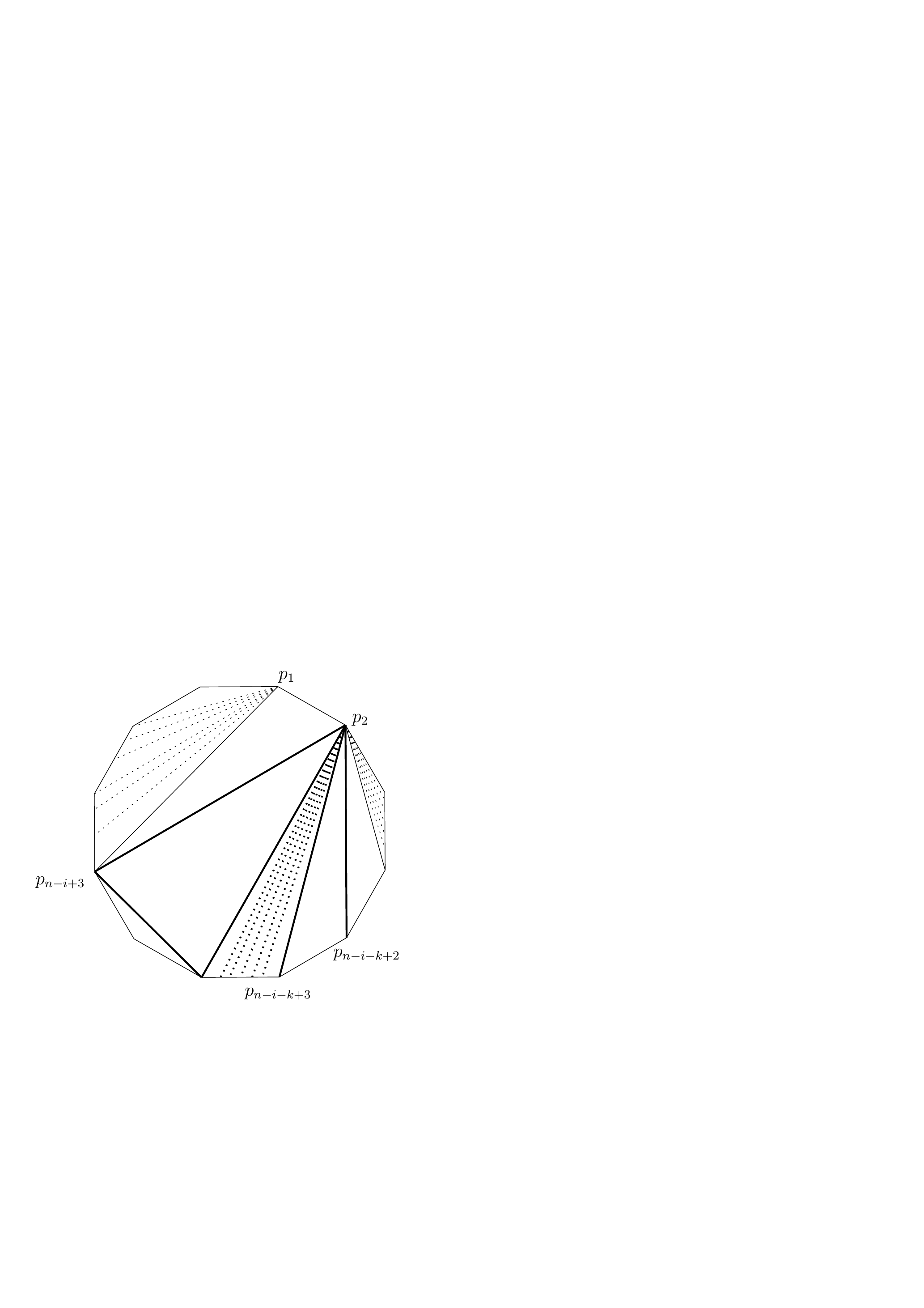}
		\hspace{5 ex} \hspace{3 ex}
		\centering \includegraphics[scale=0.75, page=4]{flip_sequence_generators_minimality2}
		\caption{Triangulations of $u_i$ (left) and $u_{i+k}$ (right).}
		\label{fig:flip_sequence_generators_minimality}
	\end{figure}

\subsection{Flip neighbors of generators}
In this section, we give a full list of the flip neighbors of the generators $I$, $u_1$, $\ldots$, $u_{n-1}$  of the Temperley-Lieb algebra $TL_n$ in the flip graph of the $n\!+\!2$-gon. 

\begin{lem}\label{lem:flip_neighbors_identity}
For $k\in \{1,\ldots,n-1\}, u_{n-1} \cdots u_{n-k}$ corresponds to the triangulation which consists of the diagonal  $p_{k+1}p_{k+3}$ and $n\!-\!2$ diagonals incident with $p_1$.
\end{lem}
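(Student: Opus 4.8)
The plan is to compute the Kauffman diagram of the product $M:=u_{n-1}u_{n-2}\cdots u_{n-k}$ explicitly as a plane perfect matching on the $2n$ vertices, read off its outdegree sequence, push that through the bijection of Section~\ref{sec:introduction}, and then recognise which triangulation has the resulting outdegree sequence.

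First I would determine the diagram of $M$ by stacking the $k$ generator diagrams, $u_{n-1}$ on top down to $u_{n-k}$ at the bottom. The factor $u_{n-t}$ has its top arc on $v_{n-t},v_{n-t+1}$ and its bottom arc on $v_{n+t},v_{n+t+1}$, i.e.\ a cup and a cap both between the row-positions $n-t$ and $n-t+1$; at each internal gluing line the cap of the upper factor (at positions $n-t,n-t+1$) meets the cup of the lower factor (at positions $n-t-1,n-t$) in a single endpoint, so the gluings create no closed loops and $M$ is a loop-free basis element, as is to be expected for a decreasing product of generators. Tracing the strands — or, equivalently, by induction on $k$, since right-multiplication by $u_{n-k}$ modifies the diagram of $u_{n-1}\cdots u_{n-k+1}$ only locally near row-position $n-k$ — shows that $M$ is the matching whose arcs are
\[
\{\,v_j v_{2n+1-j}: 1\le j\le n-k-1\,\}\ \cup\ \{\,v_{n-t}\,v_{n-1+t}: 1\le t\le k\,\}\ \cup\ \{\,v_{n+k}v_{n+k+1}\,\};
\]
for $k=1$ this is precisely the matching of $u_{n-1}$, which serves as the base case.

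Next I would compute the outdegree sequences. Each arc listed above joins a smaller index to a larger one, so $b_i=1$ exactly for $i\in\{1,\dots,n-1\}\cup\{n+k\}$; that is,
\[
B(M)=(\,\underbrace{1,\dots,1}_{n-1},\ \underbrace{0,\dots,0}_{k},\ 1,\ \underbrace{0,\dots,0}_{n-k}\,).
\]
Reading off $d_1$ as the number of $1$s before the first $0$ and $d_i$ as the number of $1$s between the $(i-1)$-st and the $i$-th $0$: the first $0$ is preceded by $n-1$ ones, the next $k-1$ zeros are consecutive, the isolated $1$ lies between the $k$-th and the $(k+1)$-st zero, and the remaining zeros are consecutive. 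Hence $D(T_M)$ has $d_1=n-1$, $d_{k+1}=1$, and $d_i=0$ for every other $i$.

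Finally, since the outdegree sequence determines the triangulation, it is enough to exhibit one triangulation with this sequence. Let $T^*$ be obtained from the fan triangulation at $p_1$ by flipping the diagonal $p_1p_{k+2}$ (a legal diagonal since $1\le k\le n-1$) to the diagonal $p_{k+1}p_{k+3}$; equivalently, $T^*$ consists of $p_{k+1}p_{k+3}$ together with the $n-2$ diagonals $p_1p_j$ with $3\le j\le n+1$ and $j\ne k+2$. In $T^*$ the point $p_1$ has outdegree $(n-2)+1=n-1$ (its $n-2$ diagonals and the counted hull edge $p_1p_{n+2}$), the point $p_{k+1}$ has outdegree $1$ (only $p_{k+1}p_{k+3}$ leaves it toward a larger index, as $p_1p_{k+1}$ enters it), and every other point has outdegree $0$. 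Thus $D(T^*)$ equals the outdegree sequence just found, and therefore $T_M=T^*$, which is the assertion. I expect the one genuinely delicate point to be the strand bookkeeping of the first step; organising it as the induction on $k$ indicated there keeps it routine.
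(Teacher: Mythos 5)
Your proof is correct and follows essentially the same route as the paper: you determine the diagram of $u_{n-1}\cdots u_{n-k}$ inductively (right multiplication by $u_{n-k}$ shifting the bottom arc), obtaining $n-2$ propagating lines plus the arcs $v_{n-1}v_n$ and $v_{n+k}v_{n+k+1}$, and then translate this through the outdegree-sequence bijection of Section~\ref{sec:introduction}. The only cosmetic difference is that you finish by matching the sequence $(n-1,0,\dots,0,1,0,\dots,0)$ against an explicitly exhibited candidate triangulation (the fan with $p_1p_{k+2}$ flipped), whereas the paper decodes the $1$s of $B(M)$ into diagonals directly; both are the same argument in substance.
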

\begin{proof}
For $k\in \{1,\ldots,n-1\}$, the multiplication of the product of generators $u_{n-1}\ldots u_{n-k+1}$ with the generator $u_{n-k}$ from right effects the shift of the arc at the bottom part of $u_{n-1}\ldots u_{n-k+1}$ to the left by one vertex in the corresponding matching, as shown in Figure \ref{fig:generator_flip_1}. 
	Hence the matchings corresponding to the products $u_{n-1} \cdots u_{n-k}$ for $k=1,\ldots, n-1$ consist of $n-2$ propagating lines, the arc $v_{n-1}v_n$ and the arc $v_{n+k}v_{n+k+1}$. By using the bijection described in Section \ref{sec:introduction}, the arc $v_{n+k}v_{n+k+1}$ corresponds to the diagonal $p_{k+1}p_{k+3}$ in the $(n\!+\!2)$-gon and the further 1s of the outdegree sequence of the matching correspond to the remaining $n\!-\!2$ diagonals, all incident with~$p_1$.  
\end{proof}

\begin{cor}
	The flip neighbors of the identity $I\in \TL_n$ are the $n\!-\!1$ elements \[\{u_{n-1}u_{n-2} \cdots u_{n-k} \mid k=1,\ldots,n-1\}.\]
\end{cor}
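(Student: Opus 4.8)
The plan is to combine the identification of $I$ as a triangulation with Lemma~\ref{lem:flip_neighbors_identity}. First I would recall from the proposition in Section~\ref{sec:triangulation_for_generators} that $I$ corresponds to the fan triangulation of the $(n\!+\!2)$-gon at $p_1$, whose diagonal set is exactly $\{p_1p_3,p_1p_4,\ldots,p_1p_{n+1}\}$, so it has $n-1$ diagonals.

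Next I would enumerate the flip neighbors directly on the triangulation side. In the fan at $p_1$, the diagonal $p_1p_j$ (for $3\le j\le n+1$) bounds precisely the two triangles $p_1p_{j-1}p_j$ and $p_1p_jp_{j+1}$, hence lies in the convex quadrilateral $p_1p_{j-1}p_jp_{j+1}$, and its unique diagonal-flip replaces $p_1p_j$ by $p_{j-1}p_{j+1}$. The resulting triangulation then consists of the single diagonal $p_{j-1}p_{j+1}$ together with the $n-2$ remaining diagonals incident with $p_1$. Since every triangulation of an $(n\!+\!2)$-gon has exactly $n-1$ flip neighbors (see the remark following Corollary~\ref{cor:triangulation_flip}), and the $n-1$ values $j\in\{3,\ldots,n+1\}$ yield $n-1$ pairwise distinct triangulations (their off-$p_1$ diagonal $p_{j-1}p_{j+1}$ differs for different $j$), these are all of them.

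Finally I would match this list against Lemma~\ref{lem:flip_neighbors_identity} via the substitution $k=j-2$: as $j$ ranges over $\{3,\ldots,n+1\}$, $k$ ranges over $\{1,\ldots,n-1\}$, and the flip neighbor obtained from $p_1p_j$ carries the diagonal $p_{j-1}p_{j+1}=p_{k+1}p_{k+3}$ plus $n-2$ diagonals at $p_1$, which is exactly the triangulation that Lemma~\ref{lem:flip_neighbors_identity} assigns to $u_{n-1}u_{n-2}\cdots u_{n-k}$. Therefore the flip neighbors of $I$ are precisely the $n-1$ elements $u_{n-1}u_{n-2}\cdots u_{n-k}$, $k=1,\ldots,n-1$.

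This is essentially a bookkeeping corollary, so I do not expect a deep obstacle; the one point to handle carefully is the boundary case, namely that flipping the outermost fan diagonal $p_1p_{n+1}$ indeed produces $p_np_{n+2}$ via the quadrilateral $p_1p_np_{n+1}p_{n+2}$ (recalling the paper's convention that $p_1p_{n+2}$ is the one hull edge that counts), so that the index range $k=1,\ldots,n-1$ closes up and no neighbor is missed or double-counted. Distinctness of the $n-1$ neighbors as elements of $\TL_n$ is immediate, since Lemma~\ref{lem:flip_neighbors_identity} gives $n-1$ different diagonals $p_{k+1}p_{k+3}$.
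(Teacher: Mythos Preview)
Your proposal is correct and follows essentially the same approach as the paper's own proof: identify $I$ with the fan at $p_1$, flip each diagonal $p_1p_{k+2}$ to $p_{k+1}p_{k+3}$, and invoke Lemma~\ref{lem:flip_neighbors_identity}. Your version is simply more explicit (you spell out the quadrilateral, verify the count $n-1$, and check distinctness), whereas the paper compresses this into two sentences.
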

\begin{proof}
The identity $I$ corresponds to the fan triangulation at $p_1$.	
	For fixed \mbox{$k\!\in\!\{1,\ldots,n\!-\!1\}$}, flipping the diagonal $p_1p_{k+2}$ of this fan triangulation to the diagonal $p_{k+1}p_{k+3}$ gives the triangulation corresponding to the element $u_{n-1}u_{n-2} \cdots u_{n-k}$ as shown in Lemma~\ref{lem:flip_neighbors_identity}. 
\end{proof}

\begin{figure}[htb]
	\centering \includegraphics[scale=0.75, page=13]{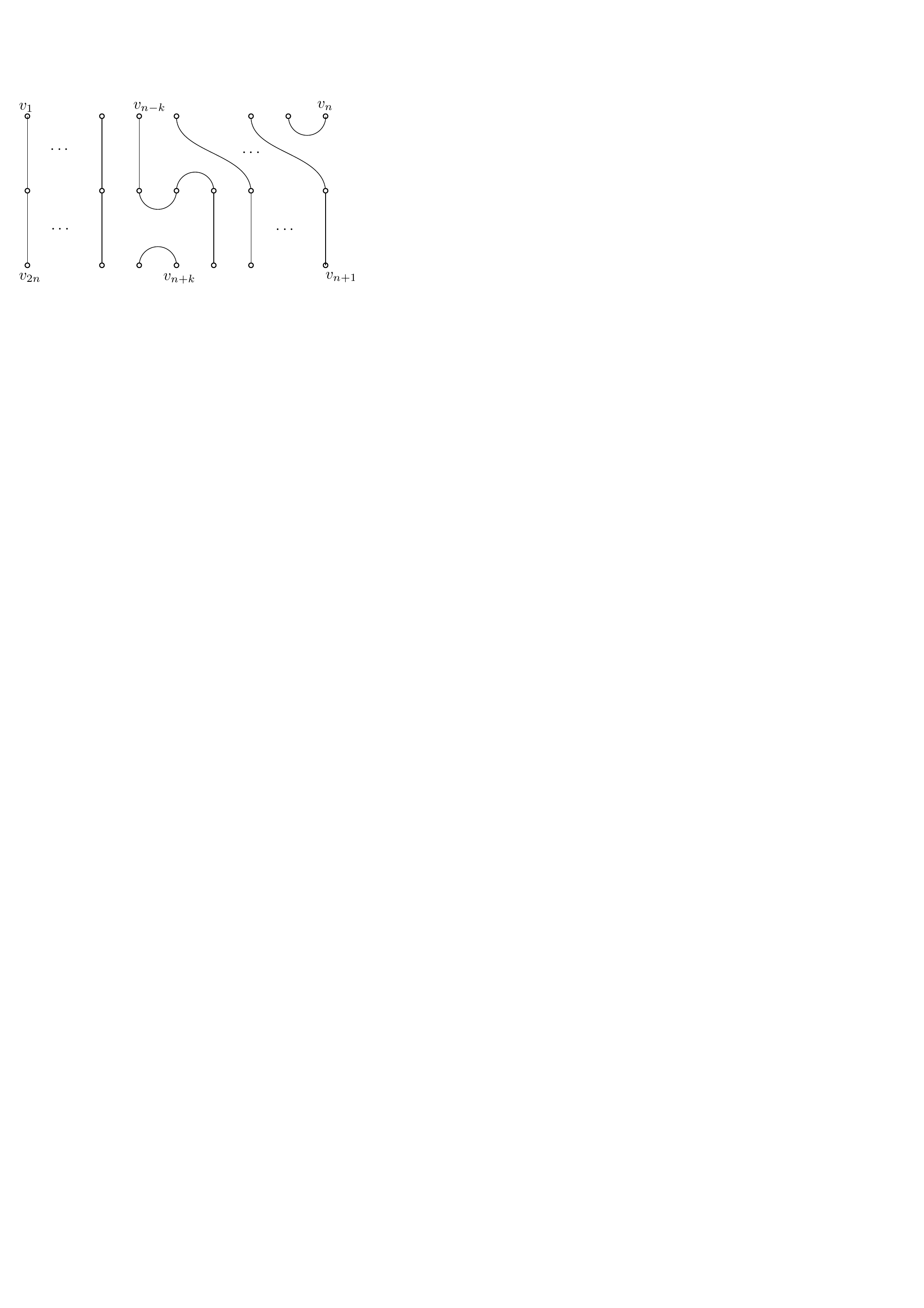}
	\caption{Pictorial presentation of the product of $u_{n-1}\cdots u_{n-k+1}$ and $u_{n-k}$ in $TL_n$.}
	\label{fig:generator_flip_1}
\end{figure} 

Next we give a complete description of all flip neighbors of the remaining generators of $TL_{n} $.
In preparation for Proposition \ref{prop:generator_flip_1}, we provide an example illustrating the connection between flip neighbors for generators of $\TL_n$ and for generators of $\TL_{n+1}$. 

\begin{figure}[h]
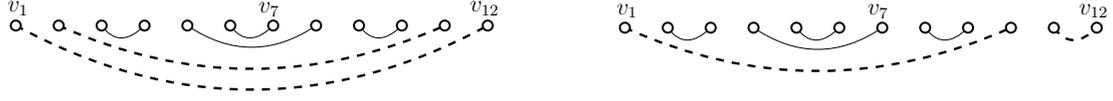

	\centering \includegraphics[scale=0.75, page=14]{generator_flip}
	\hspace{7 ex}
	\centering \includegraphics[scale=0.75, page=15]{generator_flip}
	\caption{Single row presentations of $u_3$ and $u_2u_1u_5u_4$ in $TL_8$.}
	\label{fig:generator_flip_2}
\end{figure} 
\begin{figure}[h]
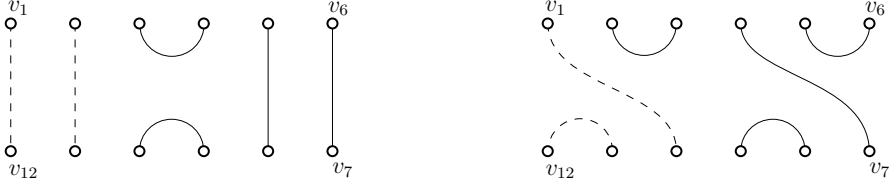

	\vspace{-2ex}
	\centering \includegraphics[scale=0.75, page=16]{generator_flip}
	\hspace{12 ex}
	\centering \includegraphics[scale=0.75, page=17]{generator_flip}
	\caption{Pictorial presentation of $u_3$ (left) and $u_2u_1u_5u_4$ (right) in $TL_8$.}
	\label{fig:generator_flip_3}
\end{figure} 
\begin{figure}[h]
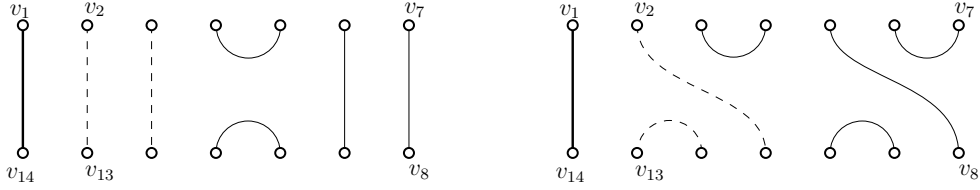

	\vspace{-2ex}
	\centering \includegraphics[scale=0.75, page=18]{generator_flip}
	\hspace{7 ex}
	\centering \includegraphics[scale=0.75, page=19]{generator_flip}
	\caption{Pictorial presentation of $u_4$ (left) and $u_3u_2u_6u_5$ (right) in $TL_9$.}
	\label{fig:generator_flip_4}
\end{figure} 
\begin{figure}[h]
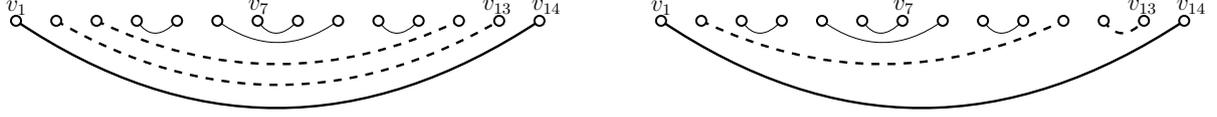

	\vspace{-2ex}
	\centering \includegraphics[scale=0.75, page=20]{generator_flip}
	\hfill 
	\centering \includegraphics[scale=0.75, page=21]{generator_flip}
	\caption{Single row presentation of $u_4$ (left) and $u_3u_2u_6u_5$ (right) in $TL_9$.}
	\label{fig:generator_flip_5}
\end{figure} 

\begin{example}\label{ex:inductive_flip}
	Figure \ref{fig:generator_flip_2} shows a flip of the triangulation corresponding to $u_3\in TL_8$ in the single row presentation, where the arcs $v_1v_{12}$ and $v_2v_{11}$ of $u_3$ are split. The element on the right of Figure \ref{fig:generator_flip_2} corresponds to $U:=u_2u_1u_5u_4=\prod_{k\in J}u_k$, with $J=\{2,1,5,4\}$ as can be seen by looking at the matching on the right of Figure \ref{fig:generator_flip_3}. When we just add a propagating line on the left of both matchings, we get the elements $u_{4}$ and $U^{+1}:=u_3u_2u_6u_5=\prod_{k\in J^{+}}u_k$, with $J^{+}=\{3,2,6,5\}$ in $TL_9$, as shown in Figure \ref{fig:generator_flip_4}. That these two elements are joined by a flip (the split of $v_2v_{13}$ and $v_3v_{12}$) can easily be seen by their single row presentations in Figure \ref{fig:generator_flip_5}.
\end{example}

Using the idea of Example \ref{ex:inductive_flip}, we can prove the following inductive formula determining $n-2$ of $n-1$ flip neighbors of the generators $u_2,\ldots, u_{n} \in TL_{n+1}$, if the flip neighbors of the generators $u_1,\ldots, u_{n-1} \in TL_{n}$ are known.

\begin{prop}\label{prop:generator_flip_1}
	Let $J$ be an ordered index set of $\{1,\ldots, n-1\}$ and $J^{+1}$ the set where every index $k\in J$ is increased by $1$. \\
	Let $U=\prod_{k\in J}{u_k}$ be a flip neighbor of $u_i$ in $TL_n$ for some index set $J$ and $1\leq i \leq n-1$. Then $U^{+1}:=\prod_{k\in J^{+1}}{u_k}$ is a flip neighbor of $u_{i+1}$ in $TL_{n+1}$.
\end{prop}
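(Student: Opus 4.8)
The plan is to make precise the construction illustrated in Example~\ref{ex:inductive_flip}: adjoining a single propagating strand on the far left of a Temperley--Lieb diagram. Write $\phi$ for the operation that turns a $TL_n$-diagram $D$ (a matching on $2n$ points) into the $TL_{n+1}$-diagram obtained by inserting one new vertex at the far left of the top row and one at the far left of the bottom row, joined by a propagating line. Passing to the single row presentation, $\phi$ is exactly the operation ``raise every vertex index by one and insert the outermost arc $v_1v_{2n+2}$''. The proof then runs: (i) establish the elementary properties of $\phi$; (ii) deduce $\phi(U)=U^{+1}$ and $\phi(u_i)=u_{i+1}$; (iii) show that $\phi$ commutes with the merge and split operations of Section~\ref{sec:triangulation}; (iv) conclude with Corollary~\ref{cor:triangulation_flip}.

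For step (i) I would record three facts. First, $\phi(u_k)=u_{k+1}$ for $1\le k\le n-1$: a direct index chase shows that adjoining the left strand moves the two arcs of $u_k$ from the pairs $(v_k,v_{k+1})$, $(v_{2n-k},v_{2n-k+1})$ to $(v_{k+1},v_{k+2})$, $(v_{2n-k+1},v_{2n-k+2})$, which is precisely $u_{k+1}$ in $TL_{n+1}$. Second, $\phi$ is multiplicative on diagrams: when two augmented diagrams are stacked, their two added left strands join into a single propagating strand and create no new closed loop, so $\phi(D_1D_2)=\phi(D_1)\phi(D_2)$ as diagrams, hence as elements of the algebra since $\alpha=1$. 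Third, $\phi$ is injective on diagrams, since the new leftmost arc can always be stripped off. From the first two, step (ii) is immediate: $\phi(U)=\phi\!\big(\prod_{k\in J}u_k\big)=\prod_{k\in J}u_{k+1}=\prod_{k\in J^{+1}}u_k=U^{+1}$, and likewise $\phi(u_i)=u_{i+1}$. In particular the single row presentation of $U^{+1}$ (resp.\ of $u_{i+1}$) is obtained from that of $U$ (resp.\ of $u_i$) by the index shift plus outermost arc.

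The heart of the argument is step (iii). By hypothesis and Corollary~\ref{cor:triangulation_flip}, the single row presentations of $U$ and $u_i$ differ by exactly one merge or split, associated with an arc $\alpha=v_pv_q$ whose starting index satisfies $p>1$ (the operation also involving the section $\mathcal{A}$ ending at $v_{p-1}$, resp.\ the arc $\beta$ starting at $v_{p-1}$). After applying $\phi$ the arc $\alpha$ becomes $v_{p+1}v_{q+1}$ with $p+1\ge 3$, so the freshly inserted vertices $v_1,v_{2n+2}$ and the outermost arc $v_1v_{2n+2}$ are disjoint from $\alpha$, from $\beta$ (which now starts at $v_p$ with $p\ge 2$), and from $\mathcal{A}$ (which now lies strictly inside the sequence, occupying indices in $\{2,\dots,p\}$). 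Hence the very same merge or split is a legal operation on the $\phi$-image of either diagram, and since it only slides one section one position to the right while leaving the outermost arc and the two new vertices untouched, it carries the single row presentation of $U^{+1}$ to that of $u_{i+1}$, and conversely. Applying Corollary~\ref{cor:triangulation_flip} once more, the corresponding triangulations of the $(n\!+\!3)$-gon differ by a single diagonal flip, so $U^{+1}$ is a flip neighbor of $u_{i+1}$ in $TL_{n+1}$; distinctness is inherited from the injectivity of $\phi$.

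I expect the only delicate point to be the bookkeeping in step (iii): one must check carefully that the two fresh vertices and the outermost arc never interfere with the transported merge or split, which is exactly where the hypothesis ``starting vertex of $\alpha$ has index greater than one'' in Corollary~\ref{cor:triangulation_flip} is used. The remaining ingredients --- the index chase for $\phi(u_k)=u_{k+1}$, the multiplicativity of $\phi$, and translating between the two-row (Kauffman) picture and the single row presentation --- are routine.
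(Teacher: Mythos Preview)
Your proposal is correct and follows essentially the same approach as the paper: both arguments adjoin a propagating strand on the far left (the paper phrases this as ``an additional propagating line $v_1v_{2n+2}$ and a shift of all indices by~1'') and then invoke the characterisation of diagonal-flips as merges or splits from Corollary~\ref{cor:triangulation_flip}, observing that an index shift preserves the adjacency structure of arcs. Your write-up is in fact more complete than the paper's, which leaves the identity $\phi(U)=U^{+1}$ implicit; your explicit verification via multiplicativity of $\phi$ on diagrams is a genuine improvement in rigour.
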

\begin{proof}
	For $n=2$, consider $J=\{\}$ and the flip neighbor $\prod_{k\in J}{u_k}=I\in TL_{2}$ of $u_1\in TL_{2}$. As $J=\{\}=J^{+1}$ and $\prod_{k\in J^{+1}}{u_k}=I\in TL_{3}$ is a flip neighbor of $u_2\in TL_{3}$, the statement holds.    
	For given $n$, let $u_i \in T_{n}$ for some $1\leq i\leq n-1$ and $u_{i+1} \in TL_{n+1}$ be generators of the two algebras. The corresponding matching of $u_{i+1}$ differs from the matching of $u_i$ by an additional propagating line $v_1v_{2n+2}$ and a shift of all indices of the vertices of the matching of $u_i$ by 1. By the characterization of a flip as a split or a merge of two neighbored arcs in the single row presentation, the statement follows immediately, as a shift of indices does not change the neighborhood of the arcs. 
\end{proof}

\begin{obs}
	If a matching in $TL_{n}$ contains the edge $v_{1}v_{2n}$, then the corresponding triangulation of the $(n\!+\!2)$-gon contains the diagonal $p_1p_{n+1}$.
\end{obs}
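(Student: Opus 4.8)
The plan is to push the hypothesis $v_1v_{2n}\in M$ through the bijection of Section~\ref{sec:introduction}: first rewrite it as a condition on the outdegree sequence $D(T_M)=(d_1,\dots,d_n)$, and then read off from that condition that the triangle $p_1p_{n+1}p_{n+2}$ is a face of $T_M$, which is exactly what it means for $p_1p_{n+1}$ to be a diagonal. \textit{Step one (rewriting the hypothesis).} Since $b_1=1$ always, $v_1$ starts a section, and the arc of $v_1$ ends at the last vertex of that section (the outermost arc of a section joins its two endpoints). Hence $v_1v_{2n}\in M$ exactly when the section starting at $v_1$ is all of $M$, i.e. when no proper prefix of $B(M)$ is balanced. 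Writing $B(M)=1^{d_1}0\,1^{d_2}0\cdots 1^{d_n}0$ and noting that the surplus (number of $1$s minus number of $0$s) of a prefix decreases only at the $0$s, its minimum over proper prefixes is attained at one of the first $n-1$ zeros, where it equals $d_1+\dots+d_i-i$. So the hypothesis is equivalent to
\[
d_1+\dots+d_i\ \ge\ i+1\qquad\text{for all }1\le i\le n-1 .
\]
Using $d_1+\dots+d_n=n$, the case $i=n-1$ forces $d_n=0$, and $i=1$ gives $d_1\ge 2$.

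\textit{Step two (peeling an ear at $p_{n+2}$).} Put $D'=(d_1-1,d_2,\dots,d_{n-1})$. By the inequalities above $D'$ has nonnegative entries, partial sums $\sum_{j\le i}d'_j=\bigl(\sum_{j\le i}d_j\bigr)-1\ge i$, and total $n-1$; hence, by the same criterion governing outdegree sequences (now applied to the $(n{+}1)$-gon $p_1\cdots p_{n+1}$, where $p_1p_{n+1}$ is the distinguished hull edge that gets counted), $D'$ is the outdegree sequence of a unique triangulation $T'$ of that polygon. Now glue the triangle $p_1p_{n+1}p_{n+2}$ onto $T'$: insert the vertex $p_{n+2}$ between $p_{n+1}$ and $p_1$, add the hull edges $p_{n+1}p_{n+2}$ and $p_{n+2}p_1$, and keep $p_1p_{n+1}$, which is now a genuine diagonal of the $(n{+}2)$-gon; call the result $T$. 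I would then check $D(T)=(d_1,\dots,d_n)$: each edge $p_ip_{n+2}$ with $2\le i\le n$ crosses $p_1p_{n+1}$, so none of them lies in $T$; therefore $d_i(T)=d'_i=d_i$ for $2\le i\le n-1$ and $d_n(T)=0=d_n$, while $d_1(T)=d'_1+1=d_1$ because the only edge out of $p_1$ present in $T$ but not counted in $D'$ is $p_1p_{n+2}$, the edge $p_1p_{n+1}$ having already been counted in $D'$. Since the correspondence is a bijection, $D(T)=D(T_M)$ yields $T=T_M$, and $p_1p_{n+1}\in T$ by construction.

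The part I expect to be fiddly is bookkeeping rather than conceptual: handling the outdegree convention on two polygons simultaneously, in particular the fact that $p_1p_{n+1}$ switches roles from the counted special hull edge of the $(n{+}1)$-gon to a genuine diagonal of the $(n{+}2)$-gon, and verifying cleanly that all of $p_2p_{n+2},\dots,p_np_{n+2}$ cross $p_1p_{n+1}$ so that the ear can be attached without disturbing any other outdegree. The remaining ingredients — the prefix-sum computation of Step one and the well-definedness of the outdegree-sequence/triangulation correspondence — are already available from Section~\ref{sec:introduction}.
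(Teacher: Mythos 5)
Your argument is correct. Note, however, that the paper itself gives no proof of this Observation: it is stated as immediate from the bijection, with the surrounding text (and Proposition~\ref{prop:generator_flip_1}) implicitly using exactly the picture you verify, namely that the outer arc $v_1v_{2n}$ corresponds to cutting off the ear triangle $p_1p_{n+1}p_{n+2}$, leaving a triangulation $T'$ of the $(n\!+\!1)$-gon $p_1\cdots p_{n+1}$. Your write-up supplies the bookkeeping the paper omits: the prefix-sum reformulation of $v_1v_{2n}\in M$ (which correctly forces $d_n=0$ and $d_1\ge 2$), the peeled sequence $D'=(d_1-1,d_2,\dots,d_{n-1})$, the gluing computation $D(T)=D(T_M)$, and the appeal to injectivity of the outdegree encoding. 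Two small remarks: the ``criterion governing outdegree sequences'' (partial sums $\ge i$, total equal to the number of counted edges) is not stated explicitly in the paper, but it does follow from the bijection with matchings since such sequences are exactly the balanced $0/1$-sequences of non-crossing matchings, so your use of it is legitimate though worth flagging; and the detour through $T'$ could be shortened by observing directly that $p_1p_{n+1}\in T_M$ if and only if no diagonal of $T_M$ is incident with $p_{n+2}$, i.e.\ the triangle at the hull edge $p_{n+1}p_{n+2}$ is $p_1p_{n+1}p_{n+2}$ --- but your route is sound and proves slightly more (it identifies $T_M$ as $T'$ with the ear attached, which is exactly what the paragraph following the Observation uses).
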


This observation is exactly the situation covered by Proposition \ref{prop:generator_flip_1} above. The $n\!-\!2$ flips of an arbitrary triangulation $T'$ of a $(n\!+\!1)$-gon are also performable flips for the triangulation $T$ of the $(n\!+\!2)$-gon, which contains all diagonals of $T'$ and the additional triangle $p_1p_{n+1}p_{n+2}$. 

\begin{obs}\label{obs:product_of_elements}
Reversing the argument of Lemma~\ref{lem:flip_neighbors_identity}, we easily obtain that multiplication of an element $u_{1}\cdots u_{n-k-1}$ with $u_{n-k}$ shifts the arc $v_{n+k+1}v_{n+k+2}$ of the matching corresponding to $u_{1}\cdots u_{n-k-1}$ by one in the other direction. Hence the matching $u_{1}\cdots u_{n-k-1} u_{n-k}$ contains the arcs $v_1v_2$ and $v_{n+k}v_{n+k+1}$, and $n\!-\!2$ propagating lines.
\end{obs}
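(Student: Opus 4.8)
The plan is to deduce the statement from Lemma~\ref{lem:flip_neighbors_identity} by exploiting the left--right mirror symmetry of Kauffman diagrams. On the algebra side this symmetry is the map $\phi\colon TL_n\to TL_n$ given by $u_i\mapsto u_{n-i}$. First I would check that $\phi$ is an algebra automorphism: relation~(1) is clearly preserved, the condition $|i-j|>1$ in relation~(2) is invariant under $i\mapsto n-i$, and relations~(3) and~(4) are simply interchanged; since $\phi$ is multiplicative on words it preserves all of these. Pictorially $\phi$ is the reflection $\rho$ of a diagram across a vertical axis, and since such a reflection commutes both with the top-to-bottom concatenation that defines the product of two diagrams and with the removal of closed loops, $\rho$ is multiplicative on diagrams. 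Hence for every word $w$ the matching of $\phi(w)$ is the mirror image of the matching of $w$. In particular $u_1u_2\cdots u_{n-k}=\phi\bigl(u_{n-1}u_{n-2}\cdots u_{k}\bigr)$, so the matching of $u_1\cdots u_{n-k}$ is the mirror image of the matching of $u_{n-1}\cdots u_{k}$, which is the product $u_{n-1}\cdots u_{n-\kappa}$ treated in Lemma~\ref{lem:flip_neighbors_identity} with $\kappa=n-k\in\{1,\dots,n-1\}$. (This covers $1\le k\le n-1$; for $k=n-1$ the empty product $u_1\cdots u_{n-k-1}$ is read as $I$ and the claim degenerates to $I\cdot u_1=u_1$.)

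The next step is the index translation. By (the proof of) Lemma~\ref{lem:flip_neighbors_identity}, the matching of $u_{n-1}\cdots u_{k}$ consists of $n-2$ propagating lines, the top arc $v_{n-1}v_n$, and the bottom arc $v_{2n-k}v_{2n-k+1}$. Under $\rho$ a top vertex $v_j$, which sits at position $j$ from the left, goes to $v_{n+1-j}$, whereas a bottom vertex $v_{n+i}$, which, because of the clockwise labelling, sits at position $n+1-i$ from the left, goes to the bottom vertex at position $i$, namely $v_{2n+1-i}$. Therefore the top arc $v_{n-1}v_n$ becomes $v_1v_2$, the bottom arc $v_{2n-k}v_{2n-k+1}=\{v_{n+(n-k)},v_{n+(n-k+1)}\}$ becomes $\{v_{2n+1-(n-k)},v_{2n+1-(n-k+1)}\}=v_{n+k}v_{n+k+1}$, and the $n-2$ propagating lines stay propagating. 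This is precisely the asserted description of the matching of $u_1\cdots u_{n-k-1}u_{n-k}=u_1\cdots u_{n-k}$.

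For the ``shift'' part I would apply $\rho$ to the inductive step used in the proof of Lemma~\ref{lem:flip_neighbors_identity}: there, right-multiplying $u_{n-1}\cdots u_{n-\kappa+1}$ by $u_{n-\kappa}$ moves the (unique) bottom arc one vertex to the left and leaves everything else unchanged. Reflecting this, right-multiplying $u_1\cdots u_{\kappa-1}$ by $u_{\kappa}$ moves the bottom arc one vertex to the right; taking $\kappa=n-k$, and using the description above (with $k$ replaced by $k+1$) for the bottom arc $v_{n+k+1}v_{n+k+2}$ of $u_1\cdots u_{n-k-1}$, we conclude that multiplication by $u_{n-k}$ shifts this arc by one towards $v_{n+1}$, landing on $v_{n+k}v_{n+k+1}$ --- the direction opposite to the one in Lemma~\ref{lem:flip_neighbors_identity}, as claimed.

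I expect the only real subtlety to be the index bookkeeping under the reflection: because the $2n$ vertices are labelled clockwise, the bottom row runs from right to left, so $\rho$ sends a top vertex $v_j$ to $v_{n+1-j}$ but a bottom vertex $v_{n+i}$ to $v_{2n+1-i}$, and conflating these two rules would scramble the arc indices. A secondary point is the routine verification that the pictorial reflection is multiplicative on diagrams, so that it genuinely realises $\phi$. If one prefers to avoid $\phi$ entirely, one can instead literally ``reverse'' the induction of Lemma~\ref{lem:flip_neighbors_identity}: start from $u_1$, whose matching has the top arc $v_1v_2$, the bottom arc $v_{2n-1}v_{2n}$ and $n-2$ straight propagating lines, and observe that right-multiplying $u_1\cdots u_{m-1}$ by $u_m$ leaves the top arc $v_1v_2$ untouched (it never reaches the bottom row) and moves the unique bottom arc one vertex towards $v_{n+1}$ --- which is exactly the mirror image of the computation already performed in Lemma~\ref{lem:flip_neighbors_identity}.
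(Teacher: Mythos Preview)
Your argument is correct. The paper treats this as an observation with no separate proof beyond the phrase ``reversing the argument of Lemma~\ref{lem:flip_neighbors_identity}'', by which it simply means: carry out the analogous pictorial computation starting from $u_1$ and right-multiplying successively by $u_2,u_3,\ldots$, noting that each step shifts the unique bottom arc one vertex towards $v_{n+1}$. This is precisely the alternative you sketch in your final paragraph.

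Your primary route, via the involutive automorphism $\phi\colon u_i\mapsto u_{n-i}$ realised geometrically by the vertical reflection $\rho$, is a cleaner and more conceptual formalisation of what ``reversing'' means: instead of redoing the inductive picture calculation, you transport the conclusion of Lemma~\ref{lem:flip_neighbors_identity} across a symmetry. The cost is the extra bookkeeping you flag --- checking that $\rho$ is multiplicative on diagrams and tracking that top and bottom vertices transform by different index formulas ($v_j\mapsto v_{n+1-j}$ versus $v_{n+i}\mapsto v_{2n+1-i}$) --- but the payoff is that the result follows from Lemma~\ref{lem:flip_neighbors_identity} itself rather than from a parallel computation, and the same symmetry can be reused elsewhere (for instance in Remark~\ref{rmk:remaining_flip_generators}). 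Both approaches are valid and yield the same conclusion.
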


In order to prove the following lemma, which gives the product form of the remaining flip neighbor for the elements $u_2,\ldots, u_{n} \in TL_{n+1}$, we first recall the description of three types of products of generators.

\begin{rmk}\label{rmk:remaining_flip_generators}
As Observation \ref{obs:product_of_elements} shows, the matching corresponding to the element $u_1u_2$  consists of the arcs $v_{1}v_{2}$ and $v_{2n-2}v_{2n-1}$, the propagating line $v_3v_{2n}$ and $n-3$ straight propagating lines $v_{k}v_{2n-k+1}$ for $3\leq k \leq n$.\\
By the same idea as Lemma~\ref{lem:flip_neighbors_identity}, we obtain that the matching corresponding to $u_{n-2}u_{n-3}\cdots u_1$ consists of the arcs $v_{n-2}v_{n-1}$ and $v_{2n-1}v_{2n}$, the straight propagating line $v_{n}v_{n+1}$ and the $n\!-\!3$ propagating lines $v_{k}v_{2n-1-k}$ for $1\leq k \leq n-3$.\\
Multiplying for $3\leq k \leq n-1$ the element $u_{k-1}\cdots u_1$ with the element $u_{n-1}\cdots u_{k+1}$, we obtain the matching shown in Figure \ref{fig:generator_flip_6}. Here the dotted line emphasizes the well known fact, that for $1\leq i,j \leq n\!-\!1$, the product $u_iu_j$ of two generators of $TL_n$ commute, that is, $u_iu_j=u_ju_i$, if and only if $|i-j|\geq 2$. Hence these two elements commute and we can write  $u_{n-1}\cdots u_{k+1} u_{i-k}\cdots u_1$ for the product.
\end{rmk}	

\begin{figure}[htb]
	\centering \includegraphics[scale=0.75, page=22]{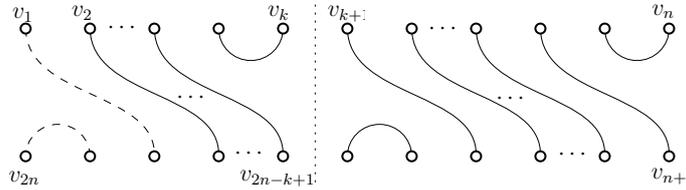}
	\caption{Result of multiplication of $u_{i-k}\cdots u_1$ and $u_{n-1}\cdots u_{k+1}$.}
	\label{fig:generator_flip_6}
\end{figure}      

\begin{lem}\label{lem:flip_neighbor}
	Let $2\leq i \leq n-1$. Then the generator $u_i\in TL_{n}$ is a neighbor of the element
	\begin{align*}
	&u_1 u_2 \ \ &\text{ if } &i=2, \\
	&u_{n-1}\cdots u_{i+1}u_{i-1}\cdots u_1 \ &\text{ if } 3\leq \ &i \leq n-2, \\
	&u_{n-2}\cdots u_1 \ &\text{ if } &i=n-1
	\end{align*}	
	in the flip graph.
\end{lem}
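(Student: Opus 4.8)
The plan is to verify each of the three cases by exhibiting, in the single row presentation, the matching corresponding to the named product and then checking that it differs from the matching of $u_i$ by exactly one split or merge (equivalently, by Corollary~\ref{cor:triangulation_flip}, one diagonal flip in the associated triangulation). All the needed product computations have essentially been done already in Observation~\ref{obs:product_of_elements} and Remark~\ref{rmk:remaining_flip_generators}, so the work is matching-combinatorics, not algebra.

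First, recall from the proof of the Proposition in Section~\ref{sec:triangulation_for_generators} that the matching of $u_i\in TL_n$ consists of the two arcs $v_iv_{i+1}$ and $v_{2n-i}v_{2n-i+1}$ together with $n-2$ propagating lines. The key structural observation is that this matching has the arc $v_iv_{i+1}$ sitting just to the right of the propagating line $v_{i-1}v_{2n-i+2}$, so splitting the arc $\alpha=v_iv_{i+1}$ from the arc $\beta=v_{i-1}v_{2n-i+2}$ (or, for $i=2$, the relevant merge at $v_2$) is the single operation we are aiming at; the task reduces to identifying the resulting matching with the one produced by the named product.

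For $i=2$: by Observation~\ref{obs:product_of_elements} with $k=n-2$, the matching of $u_1u_2$ has the arcs $v_1v_2$ and $v_{2n-2}v_{2n-1}$ and $n-2$ propagating lines. The matching of $u_2$ has arcs $v_2v_3$ and $v_{2n-2}v_{2n-1}$ and $n-2$ propagating lines. These differ precisely in that the arc $v_2v_3$ has been merged with the propagating line $v_1v_{2n}$ to give the arc $v_1v_2$ and the propagating line $v_1v_{2n}$ — i.e. a single merge for the arc $\alpha=v_2v_3$ — so by Lemma~\ref{lem:merge_and_split_to_flip} this is one flip. For $i=n-1$: the second line of Remark~\ref{rmk:remaining_flip_generators} gives the matching of $u_{n-2}\cdots u_1$ (with $n$ there replaced by the present $n$), whose arcs are $v_{n-2}v_{n-1}$ and $v_{2n-1}v_{2n}$; the matching of $u_{n-1}$ has arcs $v_{n-1}v_n$ and $v_{n+1}v_{n+2}$ (and, since $e_{n-1}=f_{n-1}$, collapses to one short arc on the bottom), and one checks the two matchings are related by a single split/merge at the appropriate vertex. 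For the generic range $3\le i\le n-2$: Remark~\ref{rmk:remaining_flip_generators} (with $k=i-1$, using the commutation $|i-j|\ge 2$ to write the product as $u_{n-1}\cdots u_{i+1}u_{i-1}\cdots u_1$) describes the matching in Figure~\ref{fig:generator_flip_6}; comparing it arc-by-arc with the matching of $u_i$ — which has the single arc pair $v_iv_{i+1}$, $v_{2n-i}v_{2n-i+1}$ — shows the only difference is that the arc at $v_i$ has been split off from its left neighbor, again one operation, hence one flip by Corollary~\ref{cor:triangulation_flip}.

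The main obstacle I expect is bookkeeping: writing down the matching of the product $u_{n-1}\cdots u_{i+1}u_{i-1}\cdots u_1$ explicitly for all $3\le i\le n-2$ and confirming it agrees with Figure~\ref{fig:generator_flip_6}, and then checking that exactly the endpoints differ so that precisely one split/merge is needed (no off-by-one errors in the index ranges, and correct handling of the boundary subcases $i=3$, $i=n-2$ where one of the fans degenerates to zero diagonals). An alternative, possibly cleaner route for the generic case is induction via Proposition~\ref{prop:generator_flip_1}: the base case $i=2$ is the $u_1u_2$ statement just proved, and increasing all indices by one carries $u_{n-1}\cdots u_{i+1}u_{i-1}\cdots u_1$ in $TL_n$ to $u_n\cdots u_{i+2}u_i\cdots u_1$ in $TL_{n+1}$, which is exactly the claimed neighbor of $u_{i+1}$; one only has to separately check that the top index $n-1$ (resp. $n$) is handled correctly under the index shift, i.e. that the new propagating line $v_1v_{2n+2}$ does not interfere with the arc performing the split. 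I would present the $i=2$ and $i=n-1$ cases by direct matching computation and the middle range by this induction, which keeps the case analysis short.
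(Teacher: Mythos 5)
Your overall strategy (exhibit the matching of the named product and show it differs from the matching of $u_i$ by a single split/merge) is the paper's strategy, but your identification of the operation in the main case is wrong. Splitting the arc $\alpha=v_iv_{i+1}$ from its left neighbour $\beta=v_{i-1}v_{2n-i+2}$ produces the matching with arcs $v_{i-1}v_i$, $v_{2n-i}v_{2n-i+1}$, the line $v_{i+1}v_{2n-i+2}$ and all other lines $v_kv_{2n+1-k}$ untouched --- that is the element $u_{i-1}u_i$, another flip neighbour of $u_i$, not $u_{n-1}\cdots u_{i+1}u_{i-1}\cdots u_1$. Relatedly, your claim that the two matchings ``differ only in the arc at $v_i$'' is false: computing the diagram of $u_{n-1}\cdots u_{i+1}u_{i-1}\cdots u_1$ gives top arcs $(i-1,i)$ and $(n-1,n)$, bottom arcs $(1,2)$ and $(i+1,i+2)$, and lines $v_kv_{2n-k-1}$, so almost every edge of $u_i$ is moved. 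The single operation that achieves this is the one in the paper's proof: the split of $v_2v_{2n-1}$ from $v_1v_{2n}$ (for $i=2$, of $v_2v_3$ from $v_1v_{2n}$), which shifts the entire section $v_2,\ldots,v_{2n-1}$ one step to the left and creates the new arc $v_{2n-1}v_{2n}$; a flip can change many matching edges at once, and this case exploits exactly that. (In your $i=2$ case the conclusion is right, but the operation is a split, not a merge, and the surviving line becomes $v_3v_{2n}$, not $v_1v_{2n}$; your $i=n-1$ case is left as ``one checks''.)

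Your proposed fallback, induction via Proposition~\ref{prop:generator_flip_1}, also cannot work here. Raising all indices by one sends $u_{n-1}\cdots u_{i+1}u_{i-1}\cdots u_1\in TL_n$ to $u_n\cdots u_{i+2}u_i\cdots u_2\in TL_{n+1}$ (the last factor becomes $u_2$, not $u_1$), which is a different basis element from the claimed neighbour $u_n\cdots u_{i+2}u_i\cdots u_2u_1$ of $u_{i+1}$. Indeed, the paper introduces Lemma~\ref{lem:flip_neighbor} precisely to supply the one remaining flip neighbour that Proposition~\ref{prop:generator_flip_1} does not produce --- it is the flip whose split involves the outermost arc $v_1v_{2n}$, i.e.\ the arc corresponding to the extra propagating line added in the induction, so it is structurally outside the reach of that proposition. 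To repair the argument you should compute the diagram of $u_{n-1}\cdots u_{i+1}u_{i-1}\cdots u_1$ (as in Remark~\ref{rmk:remaining_flip_generators} and Figure~\ref{fig:generator_flip_6}) and verify that it coincides with the result of the split of $v_2v_{2n-1}$ from $v_1v_{2n}$ applied to $u_i$.
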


\begin{proof}
The split of the arcs $v_{2}v_{2n-1}$ and $v_1v_{2n}$ for $i\neq 2$ and the split of $v_1v_{2n}$ and $v_2v_3$ for $i=2$ respectively yields to a single row presentation, which corresponds to the matching (described in Remark \ref{rmk:remaining_flip_generators}) of the elements stated in the lemma in every case.
\end{proof}

For $n=8$ and $i=3$, Figure \ref{fig:generator_flip_1} and Figure \ref{fig:generator_flip_2} show the flip neighbor of $u_3$ in $TL_8$, which is described by Lemma~\ref{lem:flip_neighbor}. All the other flip neighbors of $u_3$ can be constructed by Proposition~\ref{prop:generator_flip_1}.\\
It only remains to determine the flip neighbors of $u_1$ in $TL_n$. For this, we first recall the description of three particular products of generators.

\begin{lem}
	The flip neighbors of $u_1 \in TL_{n}$ are 
	\begin{align*}
	&u_2u_1\\
	&u_1u_2\cdots u_{n-1} \\
	&u_{n-1} \cdots u_k u_1 \text{ for } 3\leq k \leq n-1.
	\end{align*}
\end{lem}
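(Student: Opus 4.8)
The plan is to mimic the structure of the proof of Lemma~\ref{lem:flip_neighbor}: identify the single row presentation of $u_1 \in TL_n$, enumerate the $n-1$ arcs $\alpha$ whose starting vertex has index greater than one, apply Corollary~\ref{cor:triangulation_flip} to each (performing either a split or a merge), and then read off which product of generators the resulting matching represents. The matching of $u_1$ consists of the two arcs $v_1v_2$ and $v_{2n-1}v_{2n}$ together with $n-2$ straight propagating lines $v_kv_{2n-k+1}$ for $2 \leq k \leq n-1$. I would first list these $n-1$ arcs explicitly (the arc $v_1v_2$ starts at $v_1$, so it does \emph{not} count; the eligible ones are the $n-2$ propagating lines and the arc $v_{2n-1}v_{2n}$).

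Next I would treat the three families case by case, exactly paralleling the three products in the statement. First, for the innermost eligible propagating line, the operation is a split of $v_2v_{2n-1}$ from $v_1v_2$; by Observation~\ref{obs:product_of_elements} (with $k=n-2$, say, or rather the dual statement) this produces the matching with arcs $v_2v_3$, $v_{2n-1}v_{2n}$, propagating line shifted accordingly, which one recognizes as $u_2u_1$. Second, the merge available at the arc $v_{2n-1}v_{2n}$ — merging it with the section to its left — extends that arc outward and yields the matching with arcs $v_1v_2$ and $v_1v_{2n}$ type behavior; comparing with the Kauffman diagram calculus this is exactly $u_1u_2\cdots u_{n-1}$ (the long product whose diagram has a single arc nesting pattern). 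Third, for each intermediate propagating line $v_kv_{2n-k+1}$ with $3 \leq k \leq n-1$, the induced split separates its section from the section of the arc immediately to its left; the shift of the intervening section, reinterpreted via the bijection of Section~\ref{sec:introduction}, gives the matching corresponding to $u_{n-1}\cdots u_k u_1$. In each instance I would verify the claim by drawing (or describing in words) the single row presentation after the operation and matching its outdegree sequence against that of the claimed product, whose outdegree sequence can be computed directly from the Kauffman diagram by concatenation.

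The routine-but-essential bookkeeping step is checking that the three families together with the ineligible arc $v_1v_2$ account for all $n-1$ neighbors and that no two of the listed products coincide (for small $n$ one should double-check boundary cases, e.g. $n=2$ where $u_1$ has $n-1=1$ neighbor, and $n=3$); this is where I expect the main care to be needed, since the products $u_2u_1$, $u_1u_2\cdots u_{n-1}$, and $u_{n-1}\cdots u_ku_1$ must be shown pairwise distinct as reduced words (equivalently, as Kauffman diagrams) and their count must total exactly $n-1$, namely $1 + 1 + (n-3) = n-1$. The genuinely substantive point — and the one I would spend the most words on — is the second family: identifying the merge at $v_{2n-1}v_{2n}$ with the long product $u_1u_2\cdots u_{n-1}$, because here one cannot simply cite Observation~\ref{obs:product_of_elements} verbatim and must instead compute the diagram of $u_1u_2\cdots u_{n-1}$ from scratch (it is the diagram with arcs $v_1v_2$ on top being connected through to produce a cascade), then confirm its single row presentation is the merge-image of the $u_1$ diagram. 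Everything else follows the template already established for Lemma~\ref{lem:flip_neighbor} and Lemma~\ref{lem:flip_neighbors_identity}.
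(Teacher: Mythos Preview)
Your approach mirrors the paper's exactly: enumerate the $n-1$ eligible arcs in the single row presentation of $u_1$, classify each as a split or merge via Corollary~\ref{cor:triangulation_flip}, and identify the resulting diagram as a product of generators. The problem is bookkeeping, not strategy. The propagating lines of $u_1$ are $v_kv_{2n-k+1}$ for $3\le k\le n$, not $2\le k\le n-1$: the vertex $v_2$ is already the endpoint of the arc $v_1v_2$, so $v_2v_{2n-1}$ is not an edge of the matching at all. This off-by-one error then misclassifies the first operation. The outermost propagating line is $v_3v_{2n-2}$, and since $v_2$ is an \emph{end} vertex of the section $\{v_1v_2\}$, the operation there is a \emph{merge} (of $v_3v_{2n-2}$ with that section), not a split. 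Hence there are \emph{two} merges---at $v_3v_{2n-2}$ and at $v_{2n-1}v_{2n}$, yielding $u_2u_1$ and $u_1u_2\cdots u_{n-1}$ respectively---and $n-3$ splits, at $v_kv_{2n-k+1}$ for $4\le k\le n$, yielding the family $u_{n-1}\cdots u_ku_1$.

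With the indices corrected your plan becomes the paper's proof. One further adjustment: you single out the merge at $v_{2n-1}v_{2n}$ as the case needing the most work, but in fact both merges require the same kind of direct diagram computation, and the paper handles all identifications uniformly by appeal to the diagram descriptions collected in Remark~\ref{rmk:remaining_flip_generators} (Observation~\ref{obs:product_of_elements} alone does not cover the products $u_{n-1}\cdots u_k u_1$).
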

\begin{proof}
Consider the single row presentation of $u_1$.
The first two elements arise from merging $v_3v_{2n-2}$ with $v_1v_2$ and with $v_{2n-1}v_{2n}$, respectively. The other $n\!-\!3$ elements arise by the splitting of the arcs $v_{4}v_{2n-3}$, $v_{5}v_{2n-4}$, $\ldots$, $v_{n-1}v_{n+2}$ and $v_{5}v_{2n-4}$, $v_{6}v_{2n-5}$,$\ldots$, $v_{n}v_{n+1}$, respectively. We obtain the matchings corresponding to the stated elements of $TL_n$ analogously as in Remark~\ref{rmk:remaining_flip_generators}, and get the desired result.
\end{proof}

\begin{cor}
The pairs $\{u_ku_{k+1},u_{k+1}\}$ and $\{u_{k+1}u_k,u_k\}$ are flip neighbors for $1\leq k \leq n\!-\!2$ in $TL_n$.
\end{cor}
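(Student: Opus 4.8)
The plan is to prove the statement by induction on $n$, with the inductive step carried entirely by Proposition~\ref{prop:generator_flip_1}. First I would dispose of the case $k=1$, which holds for every $n\ge 3$ and in particular supplies the base case $n=3$ (where $k=1$ is the only admissible value): by Lemma~\ref{lem:flip_neighbor} in the case $i=2$, the generator $u_2\in TL_n$ is a flip neighbor of $u_1u_2$, and by the lemma listing the flip neighbors of $u_1\in TL_n$, the element $u_2u_1$ is a flip neighbor of $u_1$. Hence $\{u_1u_2,u_2\}$ and $\{u_2u_1,u_1\}$ are flip neighbors for all $n\ge 3$.

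For the inductive step I assume the corollary for $TL_{n-1}$ and take $2\le k\le n-2$; then $k-1$ satisfies $1\le k-1\le n-3$, so by hypothesis both pairs $\{u_{k-1}u_k,u_k\}$ and $\{u_ku_{k-1},u_{k-1}\}$ are flip neighbors in $TL_{n-1}$. Applying Proposition~\ref{prop:generator_flip_1} to the ordered index set $J=(k-1,k)$ (so that $J^{+1}=(k,k+1)$), the first pair shows that $u_ku_{k+1}=\prod_{j\in J^{+1}}u_j$ is a flip neighbor of $u_{k+1}$ in $TL_n$; applying it to $J=(k,k-1)$ (so that $J^{+1}=(k+1,k)$), the second pair shows that $u_{k+1}u_k$ is a flip neighbor of $u_k$ in $TL_n$. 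Combined with the case $k=1$, this establishes the claim for all $1\le k\le n-2$.

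The one thing to be careful about is the index bookkeeping needed to apply Proposition~\ref{prop:generator_flip_1} inside $TL_{n-1}$: one must check that the relevant $J$ is an ordered subset of $\{1,\dots,n-2\}$, that the base index ($i=k$, respectively $i=k-1$) lies in $\{1,\dots,n-2\}$, and that the element produced for $TL_n$ is literally the product claimed. All of this follows from the single chain of inequalities $2\le k\le n-2$, so I do not expect a genuine obstacle; the entire content of the corollary is carried by Proposition~\ref{prop:generator_flip_1} together with the descriptions of the flip neighbors of $u_1$ and of $u_2$. As an alternative I could argue directly, computing the Kauffman diagrams of $u_ku_{k+1}$ and $u_{k+1}u_k$ by concatenation, checking that their outdegree sequences differ from those of $u_{k+1}$ and $u_k$ by shifting a single $0$ by one position, and invoking Lemma~\ref{lem:merge_and_split_to_flip}; but the inductive route above is shorter.
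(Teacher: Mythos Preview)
Your proposal is correct and matches the paper's intended argument. The paper states the corollary without proof, immediately after Proposition~\ref{prop:generator_flip_1}, Lemma~\ref{lem:flip_neighbor}, and the lemma listing the flip neighbors of $u_1$; your induction on $n$ --- anchored at $k=1$ via the $i=2$ case of Lemma~\ref{lem:flip_neighbor} and the explicit list of neighbors of $u_1$, and propagated by Proposition~\ref{prop:generator_flip_1} --- is precisely how one unpacks that implicit deduction, with the index bookkeeping handled correctly.
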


Note that this corollary gives the flip sequence $u_{n-1} - u_{n-1}u_{n-2} - u_{n-2}$ in $TL_n$ and as the two generators are not flip neighbors, we obtain that $d(u_{n-2},u_{n-1})=2$, a result already stated in Proposition \ref{prop:flip_distance_of_generators}. 

The flip neighbors of the generators $I, u_1,\ldots u_4$ of $TL_5$ are listed in the table below:
\begin{center}
	\begin{tabular}{l|l}
		generator & neighbors \\
	     $I$   & $u_4$, $u_4u_3$, $u_4u_3u_2$, $u_4u_3u_2u_1$ \\
	     $u_1$ & $u_1u_2u_3u_4$, $u_2u_1$, $u_4u_3u_1$, $u_4u_1$ \\
		 $u_2$ & $u_1u_2$, $u_3u_2$, $u_2u_3u_4$, $u_4u_2$\\
		 $u_3$ & $u_4u_2u_1$, $u_3u_2$, $u_4u_3$, $u_3u_4$ \\
		 $u_4$ & $u_3u_2u_1$, $u_3u_2$, $u_3u_4$, $I$   
	\end{tabular} 
\end{center}

\section{Flips in matchings and their interpretation for triangulations}
\label{sec:matching}

In Section~\ref{sec:triangulation} we considered the diagonal-flip in
a triangulation and its impact on the related matching. In this
section we take a closer look at the reverse direction: What does a general
flip of two edges in a matching change for the related triangulations?
Although this question has no direct consequence for the underlying
algebraic structures, a better understanding of this relation might
help to fully understand the importance of triangulations for
generators.\\

We consider the visualization of a matching as vertices $v_1,\ldots,v_{2n}$ lying on a circle cyclically.
The area bounded by this circle is partitioned by the (non-crossing) edges of the matching into several inner faces.

\begin{defn}\label{defn:matching_flip}
Consider four indices $1 \leq i < j < k < l \leq 2n$ which are incident to a common inner face of the partition induced by a  plane perfect matching.
A flip in this matching is the change between the two possible plane matchings of these indices, that is, between the edge-pair $v_iv_l$, $v_jv_k$ and the edge-pair $v_iv_j$, $v_kv_l$, respectively.
We call such an operation a \textit{matching-flip}. 
\end{defn}

\begin{figure}[thb]
	\centering\includegraphics[scale=0.75, page=5]{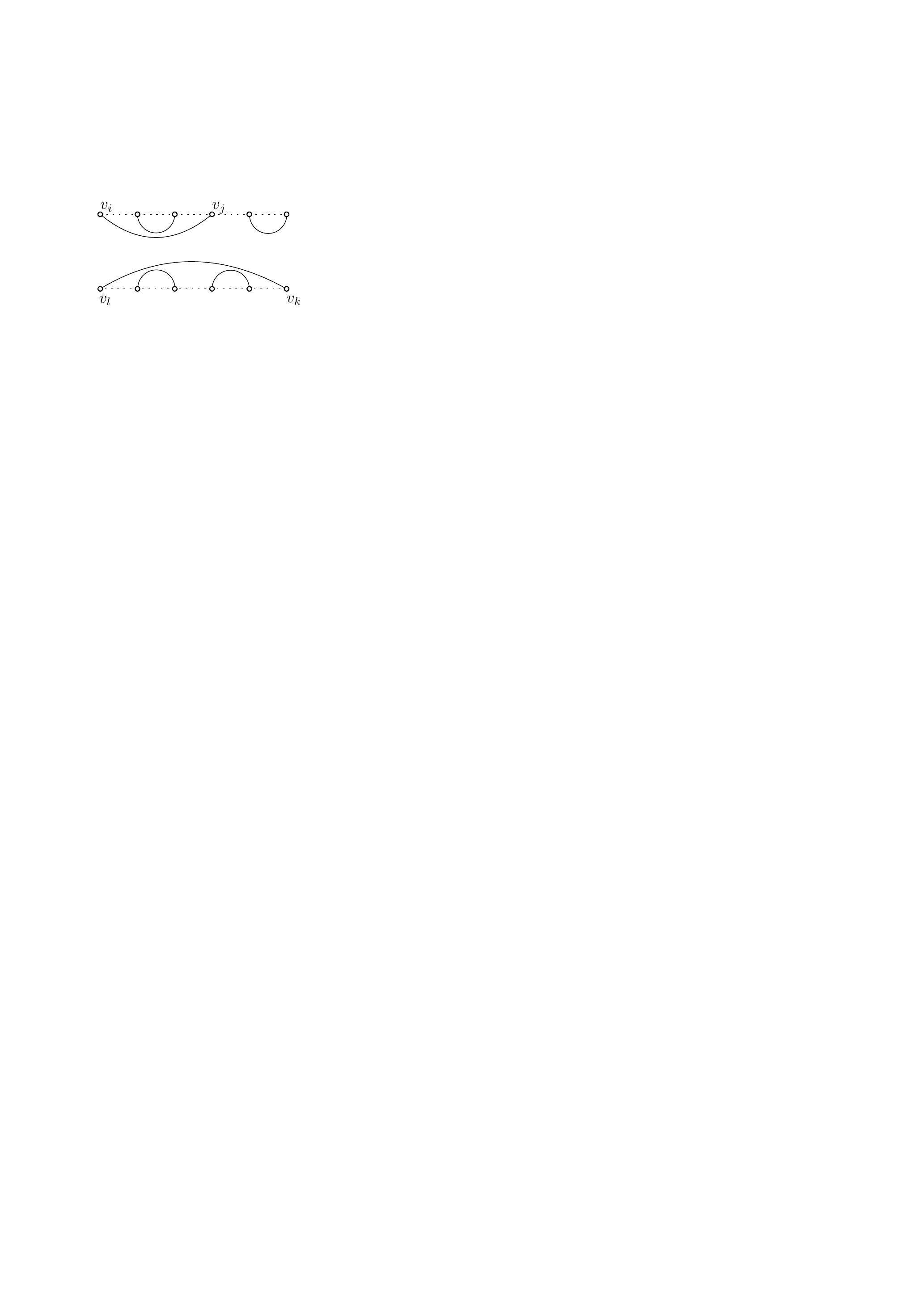} 
		\hspace{15 ex}
	\includegraphics[scale=0.75, page=6]{matchings_flip}
	\caption{Flip of a matching. Replace edges $v_iv_l$ and $v_jv_k$ (left) by $v_iv_j$ and $v_kv_l$ (right). The dotted lines indicate parts of the bounding circle.}
	\label{fig:matching_flip_1}
\end{figure}

\begin{rmk}
	\hfill \\ \vspace{-4ex}
\begin{itemize}
	\item[(i)] 
		The two possibilities in Definition~\ref{defn:matching_flip} are the only plane matchings for these indices and that in both cases the four indices share a common inner face. Thus, if a pair of edges can be flipped, the resulting matching is uniquely defined, and a flip can always be reversed by a second matching-flip.
\item[(ii)] 
	In terms of multiplication of elements of $TL_n$, a matching-flip is not always reversible, as shown in Figure~\ref{fig:matching_flip_1}: If propagating lines vanish under a flip, it is only perform-able in one direction by multiplication with elements of the TL-algebra.
For example, in Figure~\ref{fig:matching_flip_1} this is only possible from left to right.
\item[(iii)] A subset of matching-flips, where $j=i+1\neq n+1$ (or $l=k+1$) can be described easily by a multiplication of the corresponding element $X$ of the Temperley-Lieb algebra by the element $u_i$ from left for $i<n$ ($k<n$)  or right for $i>n$ ($k>n$), respectively.
\end{itemize}
\end{rmk}

We now 
describe matching-flips in terms of triangulations.
Considering the general case of a matching-flip we assume that between
any two vertices there are at least two other vertices. 
That is, $j > i+2$, $k > j+2$, and $l > k+2$. Other cases, where at least $3$ indices are neighbored, have
been considered in Section~\ref{sec:triangulation}, where only a single edge of each triangulation is flipped. The remaining cases will be discussed
at the end of this section.
Recall that the edges of a matching (the diagonals of a triangulation
of a polygon) are directed from the vertex (point) with lower index to
the vertex (point) with higher index and as described in \ref{sec:introduction}
$b_i$ ($d_i$) denotes the outdegree of the vertex $v_i$
(point $p_i$) for $1 \leq i \leq 2n$ ($1 \leq i \leq n+2$). Furthermore, the outdegree sequence of a matching $M$ is denoted by $B(M)$, the outdegree sequence of a triangulation $T$ is denoted by $D(T)$. \\
Let $B(M)=(b_1,\ldots,b_{2n})$ be the outdegree sequence of a matching $M$ with flippable edges $v_iv_l$ and $v_jv_k$. Then

\vspace{-5ex}
\begin{align*}
&  i \hspace{1.9cm}  j \hspace{1.9cm}  k \hspace{1.85cm}  l \\
B(M)= (1, \ldots, \ &\mathbf{1}, \ 1, \ldots, 0, \ \mathbf{1}, \ 1, \ldots, 0, \ \mathbf{0}, \ 1, \ldots, 0, \ \mathbf{0}, \ldots, 0), 
\end{align*}
where $i$,$j$,$k$ and $l$ are at the position of the entries $b_i$,$b_j$,$b_k$ and $b_l$ respectively.  As edges $v_iv_l$ and $v_jv_k$ are flippable, $v_{i+1}v_{j-1}$, $v_{j+1}v_{k-1}$ and $v_{k+1}v_{l-1}$ are forced to form sections in the single row presentation of $M$ and are in particular edges of $M$. Therefore $b_{i+1}=b_{j+1}=b_{k+1}=1$ and $b_{j-1}=b_{k-1}=b_{l-1}=0$ in $B(M)$. 
We define $D(T_M)=:(a_1,\ldots,a_n)$ as the outdegree sequence of the triangulation obtained by $M$. Then there are indices $p$ and $q$ with $1\leq p<q\leq n$, where the bijection described in Section \ref{sec:introduction} defines $a_p$ of $D(T_M)$ as sum of 1s starting with $b_j=b_{j+1}=1$ till the next 0 in $B(M)$ and $a_{q+1}$ as the sum of 1s in $B(M)$ starting with $b_{k+1}=1$. Note that by this definition of $q$, $a_q$, is the number of 1s between $b_{k-1}=0$ and $b_k=0$ and hence $a_q=0$. Moreover, $a_{q-1}\leq 1$,  as at least one of $b_{k-2}$ or $b_{k-3}$ is $0$,
otherwise one of the edges emanating from $v_{k-2}$ or $v_{k-3}$
would have to cross the edge $v_jv_k$, a contradiction to the fact that the edges  $v_iv_l$ and $v_jv_k$ are flippable. Furthermore, $a_p\geq 2$ and $a_{q-1}\geq 1$ and the values $a_p,\ldots,a_{q-1}$, $a_{q+1}$ are crucial for the matching-flip, as stated in following proposition.  

\begin{prop}
Let $B(M)$ be the outdegree sequence of a matching $M$ with flippable edges $v_iv_l$ and $v_jv_k$ and $B(M')$ be the outdegree sequence of the matching $M'$ which consists of the flipped edges $v_iv_j$ and $v_kv_l$ and all further edges coincide with those of $M$. Assume that the outdegree sequence of the triangulation $T_M$ is of the form

\begin{align*}
D(T_M)=(a_1,\ldots,a_{p-1},a_{p},a_{p+1},a_{p+2},\ldots,a_{q-1},0,a_{q+1},a_{q+2},\ldots,a_n)
\end{align*}  for indices $1\leq p < q \leq n$ and non-negative integers $a_1,\ldots,a_{q-1},a_{q+1},\ldots a_n$ as defined above. Then $D(T_{M'})$ is of the form
\begin{center}
	\begin{tabular}{|c|l|c|c|c|c|c|l|c|c|c|c|l|c|}
		$d_1$ & $\ldots$ & $d_{p-1}$ & $d_p$ & $d_{p+1}$ & $d_{p+2}$ & $d_{p+3}$ & $\ldots$ & $d_{q-1}$ & $d_{q}$ & $d_{q+1}$ & $d_{q+2}$& $\ldots$ & $d_n$ \\
		\hline
		$a_1$ & $\ldots$& $a_{p-1}$ & $0$ & $a_p-1$ & $a_{p+1}$ & $a_{p+2}$ & $\ldots$ & $\ldots$ & $a_{q-1}$ & $a_{q+1}+1$ & $a_{q+2}$& $\ldots$ & $a_n$ \\
	\end{tabular}\ .
\end{center}
\end{prop}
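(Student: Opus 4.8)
The strategy is to compute $B(M')$ explicitly from $B(M)$, and then apply the bijection of Section~\ref{sec:introduction} to read off $D(T_{M'})$. First I would record precisely what the flip does to the single row presentation. Replacing the edge-pair $v_iv_l,v_jv_k$ by $v_iv_j,v_kv_l$ does not move any vertex, it only re-pairs the endpoints; so the outdegree sequence changes only in the entries $b_i,b_j,b_k,b_l$. Concretely, $v_iv_l$ contributed a $1$ at position $i$ and a $0$ at position $l$, while $v_jv_k$ contributed a $1$ at position $j$ and a $0$ at position $k$; after the flip, $v_iv_j$ contributes a $1$ at position $i$ and a $0$ at position $j$, and $v_kv_l$ contributes a $1$ at position $k$ and a $0$ at position $l$. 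Hence only $b_j$ flips from $1$ to $0$ and $b_k$ flips from $0$ to $1$; all other entries of $B$ are unchanged. So $B(M')$ is obtained from $B(M)$ by swapping the $1$ at position $j$ and the $0$ at position $k$.

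Next I would translate ``swap a $1$ at position $j$ with a $0$ at position $k$, where everything strictly between is a balanced block of arcs'' into the language of the triangulation outdegree sequence. Recall from the excerpt's setup that $a_p$ is the count of $1$s in the block of $B(M)$ that begins with $b_j=b_{j+1}=1$ and runs to the first following $0$, and $a_{q+1}$ is the count of $1$s in the block beginning with $b_{k+1}=1$; moreover $a_q=0$ is the (empty) block between $b_{k-1}=0$ and $b_k=0$, and the blocks $a_{p+1},\dots,a_{q-1}$ are exactly the blocks of $1$s lying strictly between position $j$ and position $k$ in $B(M)$. Now in $B(M')$: the $0$ that was at position $j$ in $B(M)$ is no longer there as a ``$1$'', it has become a $0$, so it \emph{ends} a block one position earlier; this means a new empty block $d_p=0$ is created, and the block that was $a_p$ loses its leading $1$ and becomes $a_p-1$, now sitting at index $d_{p+1}$. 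Each of the intermediate blocks $a_{p+1},\dots,a_{q-1}$ is shifted by one index (from $d_{p+1},\dots,d_{q-1}$ in $D(T_M)$ to $d_{p+2},\dots,d_q$ in $D(T_{M'})$), because one extra $0$ has been inserted before them. Finally, the $1$ that appeared at position $k$ in $B(M')$ merges into the block beginning at $b_{k+1}=1$, turning $a_{q+1}$ into $a_{q+1}+1$; and since the former empty block $a_q=0$ between positions $k-1$ and $k$ disappears (that $0$ at position $k-1$ now directly precedes the $1$ at position $k$, extending the next block rather than closing an empty one), the net index count balances out so that $d_{q+1}=a_{q+1}+1$ and $d_{q+2}=a_{q+2}$, with everything at index $\geq q+2$ and everything at index $\leq p-1$ untouched. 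This reproduces the claimed table.

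The main obstacle is purely bookkeeping: making sure the index shift is counted correctly, i.e.\ that exactly one $0$ is gained between positions $j$ and $k$ under the flip so that the middle blocks shift by exactly one and the tail reattaches with no net shift. The cleanest way to make this rigorous is to count, for each index $m$, the number of $0$s of $B(M)$ strictly before the $m$-th $0$, versus the same count in $B(M')$; since $B$ and $B'$ differ only by swapping the $1$ at $j$ with the $0$ at $k$, and $j<k$, the $0$s of $B'$ are the $0$s of $B$ with one removed at position $k$ in $B(M)$-indexing and one added at position $j$, so for $0$s lying between positions $j$ and $k$ the running count increases by one, and for $0$s beyond position $k$ it is unchanged. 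Feeding this into the definition of the $d_i$ from Section~\ref{sec:introduction} gives the table entry by entry. I would also note explicitly that the hypotheses $a_p\geq 2$ and $a_{q-1}\geq 1$ (established before the proposition) guarantee that $a_p-1\geq 1$, so the block structure of $B(M')$ is genuinely the one described and no further collapses occur, which is what makes the stated table the correct normal form for $D(T_{M'})$.
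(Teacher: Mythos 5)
Your proposal is correct and follows essentially the same route as the paper: you first observe that the flip only changes $b_j$ from $1$ to $0$ and $b_k$ from $0$ to $1$, and then push this change through the bijection of Section~\ref{sec:introduction} to obtain the shifted outdegree sequence $D(T_{M'})$. Your zero-counting bookkeeping on the $0/1$-sequence is just a sequence-level phrasing of the paper's description of the outgoing edges of $p_p,\ldots,p_q$ being shifted by one point, so the two arguments coincide in substance.
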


\begin{proof}
The outdegree sequences $B(M)$ and $B(M')$ are of the form

\vspace{-4ex}
\begin{align*}
B(M)= (1, \ldots, \ &\mathbf{1}, \ 1, \ldots, 0, \ \mathbf{1}, \ 1, \ldots, 0, \ \mathbf{0}, \ 1, \ldots, 0, \ \mathbf{0}, \ldots, 0) \\
&  i \hspace{1.9cm}  j \hspace{1.9cm}  k \hspace{1.85cm}  l \\
B(M')= (1, \ldots, \ &\mathbf{1}, \ 1, \ldots, 0, \ \mathbf{0}, \ 1, \ldots, 0, \ \mathbf{1}, \ 1, \ldots, 0, \ \mathbf{0}, \ldots, 0).
\end{align*} 
We consider the corresponding outdegree sequence $D(T_M)$ for the triangulation $T_M$ which is related to the matching $M$,
\begin{center}
	\begin{tabular}{|c|l|c|c|c|c|l|c|c|c|c|l|c|}
		 $d_1$ &$\ldots$ & $d_p$ & $d_{p+1}$ & $d_{p+2}$ & $d_{p+3}$ & $\ldots$ & $d_{q-1}$ & $d_{q}$ & $d_{q+1}$ & $d_{q+2}$ & $\ldots$ & $d_{n}$ \\
		\hline
		 $a_1$ & $\ldots$ & $a_p$ & $a_{p+1}$ & $a_{p+2}$ & $a_{p+3}$ & $\ldots$ & $a_{q-1}$ & $0$ & $a_{q+1}$ & $d_{q+2}$ & $\ldots$ & $a_{n}$ \\ 
	\end{tabular}
\end{center}
with indices $p$ and $q$ and non-negative integers $a_1,\ldots,a_{q-1},a_{q+1},\ldots a_n$ as stated above.

The change of the outdegree of $v_j$ from 1 to 0 means that the outgoing edges of $p_p$ are reduced by one and shifted to $p_{p+1}$. Consequently the outgoing edges of $p_{p+1}$ are shifted to $p_{p+2}$ and so on till the point $p_q$. The point $p_q$ had no outgoing edges, which corresponds to the fact that the vertices $v_k$ and also $v_{k-1}$ had outdegree 0. The change of the outdegree of $v_k$ from 0 to 1 means that $p_{q+1}$ gets one additional outgoing edge. All remaining points are not affected. In other words, there is one edge for which the vertex it emanates from changes (from $p_p$ to $p_{q+1}$), and a whole block of the triangulation that is shifted by one point. Hence the outdegree sequence $D(T_{M'})$ of the triangulation $T_{M'}$ corresponding to the matching-flip $M'$ is of the form
\begin{center}
	\begin{tabular}{|c|l|c|c|c|c|l|c|c|c|c|l|c|}
	  $d_1$ &	$\ldots$ & $d_p$ & $d_{p+1}$ & $d_{p+2}$ & $d_{p+3}$ & $\ldots$ & $d_{q-1}$ & $d_{q}$ & $d_{q+1}$ & $d_{q+2}$ & $\ldots$ & $d_{n}$ \\
		\hline
	$a_1$ &	$\ldots$ & $0$ & $a_p-1$ & $a_{p+1}$ & $a_{p+2}$ & $\ldots$ & $a_{q-2}$ & $a_{q-1}$ & $a_{q+1}+1$ & $d_{q+2}$ & $\ldots$ & $d_{n}$ \\
	\end{tabular} \ ,
\end{center}
as claimed.
\end{proof}

\begin{figure}[htb]
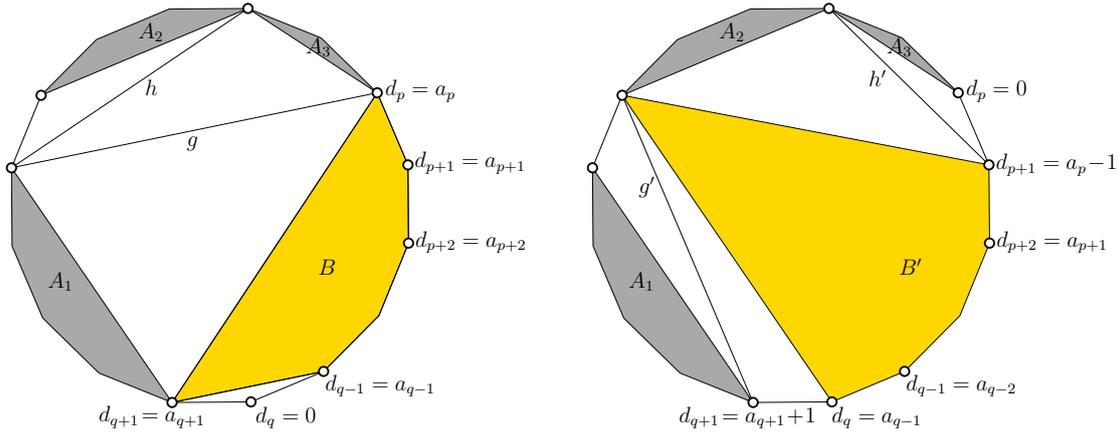
  
	\centering \includegraphics[scale=0.7, page=7]{matchings_flip}
	\hspace{3 ex}
	\centering \includegraphics[scale=0.7, page=8]{matchings_flip}
	\caption{The impact of the matching-flip of Figure~\ref{fig:matching_flip_1} on a triangulation.
                 The (triangulated) parts $A_1$, $A_2$, $A_3$ are not affected, edges $g$ and $h$ are moved, and the triangulated part $B$ is shifted. Labels indicate the outdegrees.}
	\label{fig:matching_flip_2}	
\end{figure}

Figure~\ref{fig:matching_flip_2} illustrates the affect of the matching-flip shown in Figure~\ref{fig:matching_flip_1} to the underlying triangulations.
Diagonal $g$ in $T_M$ is the diagonal which emanates from $p_p$ before the flip, and
$p_{q+1}$ in $T_{M'}$. The part $B$ of the triangulation $T_M$ is rotated
by one point, and parts $A_1$ to $A_3$ (which might be empty sets) are not directly affected.

The diagonals belonging to the triangulation $B$ of the subpolygon
$p_p,p_{p+1},\ldots,p_{q+1}$ rotate to $B'$ in $T_{M'}$ as follows: the start of
the diagonal (lower index) is shifted by one in clockwise direction
(to the vertex with the next higher index). The end of the diagonal is
shifted in the same direction. But if the edge was incident to
$p_{q+1}$ it has to skip the triangulated subpolygon $A_1$, that is, it is
shifted by more than one point.

As mentioned, the start point of $g$ is changed from $p_p$ to
$p_{q+1}$, and its end point is shifted by one. The diagonal $h$
still emanates from the same point, and its endpoint is shifted to $p_{q+1}$.

Before we address the special cases, where some of the indices $i,j,k,l$ are neighbored, note the following connection between the matchings $M$ and $M'$ and their triangulations $T_M$ and~$T_{M'}$. 

\begin{rmk}
The diagonal $g$ in $T_M$ is determined by the edge $v_jv_k$ in $M$ and the diagonal $g'$ in $T_{M'}$ is defined by the edge $v_kv_l$ in $M'$. Furthermore, the diagonal $h$ is determined by the edge starting at $v_{i+1}$. Finally, the triangulated part $B$ of $T_M$ is determined by the section enclosed by the edge $v_{j+1}v_{k-1}$ in $M$.
\end{rmk}

\noindent There are three special cases that may occur (maybe simultaneously):
\begin{enumerate}
\itemsep0pt
\item[(1)] If $j=i+1$, then the diagonal $h$ does not exist, which follows immediately from the remark above.
\item[(2)] If $k=j+1$, then there is no part $B$ in $T_M$, again following from the remark above.
\item[(3)] If $l=k+1$, then the parts $A_1$ and $A_2$ of $T_M$ share a common point. 
\end{enumerate}
When two cases occur simultaneously then this leads to the following results already discussed in Section~\ref{sec:triangulation}: 
If (1) and (2) occur simultaneously, then the matching-flip from $M$ to $M'$ corresponds to a flip of the edge $g$ to $g'$ in the triangulations.
If (2) and (3) occur simultaneously, then the matching-flip from $M$ to $M'$ generates a flip of $h$ in $T_M$  to $h'$ in $T_{M'}$ and a flip from $g$ in $T_M$ to $h$ in $T_{M'}$ as there is no part $B$. Together this implies a flip of the diagonal $g$ in $T_M$ to the diagonal $h'$ in $T_{M'}$.

\section{Conclusion and future work}

In our work we describe diagonal-flips of triangulations in terms of related matchings as well as matching-flips in terms of the corresponding triangulations. This gives a characterization of the connection between those two types of flips. We provide an algebraic interpretation of the flip graph of triangulations of an $(n\!+\!2)$-gon in terms of the elements of the Temperley-Lieb algebra $TL_n$, and also determine the flip distances between its generators. 

There are open questions for a complete understanding of the flip graph in terms of basis elements of the TL-algebra. The main focus in \cite{AABV2017} is the "colored case", the bijection between $k$-colored perfect matchings of $2mk$ vertices, which correspond to the Fuss-Catalan algebras $TL_{mk,k}$ and $k$-gonal tilings. We want to address flips of $k$-colored perfect matchings of $2mk$ vertices, the flip graph of the $k$-gonal tilings of the $(2mk\!+\!2)$-gon and hence obtain the algebraic connection of the flips in terms of the Fuss-Catalan algebras $TL_{mk,k}$. 
Preliminary results in this direction have already been obtained. 

\paragraph{Acknowledgements.} 
Research for this work is supported by the Austrian Science Fund (FWF) grant W1230. K. B. was furthermore supported by FWF grant P30549 and by a Royal Society Wolfson Fellowship.  We thank Cesar Ceballos for bringing Tamari Lattices to our attention.
Moreover, we thank Stefan Wilfinger for implementing a tool to visualize flips
in matchings and triangulations.

\bibliographystyle{abbrv}
\bibliography{bibliography}

\end{document}